\numberwithin{equation}{section}
\numberwithin{figure}{section}
\newtheorem{theorem}{Theorem}[section]
\newtheorem{lemma}[theorem]{Lemma}
\newtheorem{proposition}[theorem]{Proposition}
\newtheorem{remark}[theorem]{Remark}
\begin{document}

\title{\textbf{Graphical translating solitons \\for the mean curvature flow \\and isoparametric functions}}
\author{Tomoki Fujii}
\date{Department of Mathematics, Tokyo University of Science}
\maketitle

\begin{abstract}
In this paper, we consider a translating soliton for the mean curvature flow starting from a graph of a function on a domain in a unit sphere which is constant along each leaf of isoparametric foliation.
First, we show that such a function is given as a composition of an isoparametric function on the sphere and a function which is given as a solution of  a certain ordinary differential equation.
Further, we analyze the shape of the graphs of the solutions of the ordinary differential equation.
This analysis leads to the classification of the shape of such translating solitons.
Finally, we investigate a domain of the function which is given as  a composition of the isoparametric function and the solution of the ordinary differential equation in the case where the number of distinct principal curvatures of the isoparametric hypersurface defined by the regular level set for the isoparametric function is $1$, $2$, or $3$.
\end{abstract}

\section{Introduction}

Let $N$ be a $n$-dimensional Riemannian manifold and $u:M\to\mathbb{R}$ be a function on a domain $M\subset N$.
Define the immersion $f$ of $M$ into the product Riemannian manifold $N\times\mathbb{R}$ by $f(x)=(x,u(x)),~x\in M$.
Denote the graph of $u$ by $\Gamma$.
If a $C^{\infty}$-familly of $C^{\infty}$-immersions $\{f_t\}_{t\in I}$ of $M$ into $N\times\mathbb{R}$ ($I$ is an open interval including $0$) satisfies 
\begin{equation}\label{eq:mcf}
\begin{cases}
\displaystyle\left(\frac{\partial f_t}{\partial t}\right)^{\bot_{f_t}}=H_t\\
f_0=f,
\end{cases}
\end{equation}
as $M_t=f_t(M)$, $\{M_t\}_{t\in I}$ is called the mean curvature flow starting from $\Gamma$.
Here, $H_t$ is the mean curvature vector field of $f_t$ and $(\bullet)^{\bot_{f_t}}$ is the normal component of $(\bullet)$ with respect to $f_t$.
Further, according to Hungerb\"{u}hler and Smoczyk\cite{HS}, we define a soliton of the mean curvature flow.
Let $X$ be a Killing vector field on $N\times\mathbb{R}$ and $\{\phi_t\}_{t\in\mathbb{R}}$ be the one-parameter transformation associated to $X$ on $N\times\mathbb{R}$, that is, $\phi_t$ satisfies
\begin{equation*}
\begin{cases}
\displaystyle\frac{\partial \phi_t}{\partial t}=X\circ\phi_t\\
\phi_0=id_{N\times\mathbb{R}},
\end{cases}
\end{equation*}
where $id_{N\times\mathbb{R}}$ is the identity map on $N\times\mathbb{R}$.
Here, we note that $\phi_t$'s are isometries.
Then, the mean curvature flow $\{M_t\}_{t\in I}$ is called a \textit{soliton of the mean curvature flow with respect to $X$}, if  $\widetilde{f}_t=\phi_t^{-1}\circ f_t$ satisfies
\begin{equation}
\left(\frac{\partial \widetilde{f}_t}{\partial t}\right)^{\bot_{\widetilde{f}_t}}=0.\label{eq:mcf-soliton}
\end{equation}
In the sequel, we call such a soliton a \textit{$X$-soliton} simply.
In particular, when $X=(0,1)\in T(N\times\mathbb{R})=TN\oplus T\mathbb{R}$, we call the $X$-soliton a \textit{translating soliton}.

The translating soliton for $N=\mathbb{R}^n$ has been studied by several authors.
When $n=2$, Shahriyari\cite{S} proved non-existence of complete translating graphs over bounded connected domains of $\mathbb{R}^2$ with smooth boundary.
Also, she showed that if a complete translating soliton which is a graph over a domain in $\mathbb{R}^2$, then the domain is a strip, or a halfspace, or $\mathbb{R}^2$.
Further, Hoffman, Ilmanen, Mart\'{i}n and White\cite{HIMW} showed that no complete translating soliton is the graph of a function over a halfspace in $\mathbb{R}^2$.
For the function $u:\mathbb{R}^n\to\mathbb{R}$ defined by $u(x_1,\cdots,x_n)=-\log{\cos{x_n}}$,~$(x_1,\cdots,x_n)\in\mathbb{R}^n$, the mean curvature flow starting from the graph of $u$ is the translating soliton.
When $n=1$, the curve of $u$ is called a {\it grim reaper} (Figure \ref{grim-reaper}).
When $n\ge2$, the graph of $u$ is called a {\it grim hyperplane} (Figure \ref{grim-hyperplane}).
Further, Martin, Savas-Halilaj, and Smoczyk\cite{MSS} gave the characterization of the grim hyperplane.
Clutterbuck, Schn\'{u}rer and Schulze\cite{CSS} showed the existence of  the complete rotationally symmetric graphical translating soliton which is called bowl soliton (Altschuler and Wu\cite{AW} had already showed the existence in the case $n=2$) and a certain type of stability for the bowl soliton.
Further, they showed that bowl solitons have the following asymptotic expansion as $r$ approaches infinity:
$$
\frac{r^2}{2(n-1)}-\log{r}+O(r^{-1}),
$$
where $r$ is the distance function in $\mathbb{R}^n$ because $u$ is the composition of $r$ and the solution of a certain ordinary differential equation.
Wang\cite{W} showd that the bowl soliton is the only convex translating soliton which is an entire graph.
Further, Spruck and Xiao\cite{SX} showed that the bowl soliton is the only complete translating soliton which is an entire graph.
In this paper, we consider the case where the symmetry of the graph is a little complicated.

\begin{figure}[H]
\centering
\begin{minipage}{0.49\columnwidth}
\centering
\scalebox{0.8}{{\unitlength 0.1in%
\begin{picture}(20.0000,20.0000)(2.0000,-22.0000)%
\special{pn 20}%
\special{pa 708 200}%
\special{pa 710 297}%
\special{pa 715 473}%
\special{pa 720 598}%
\special{pa 725 695}%
\special{pa 730 774}%
\special{pa 735 840}%
\special{pa 740 898}%
\special{pa 745 949}%
\special{pa 750 994}%
\special{pa 755 1035}%
\special{pa 760 1073}%
\special{pa 765 1107}%
\special{pa 770 1139}%
\special{pa 775 1168}%
\special{pa 780 1196}%
\special{pa 790 1246}%
\special{pa 800 1290}%
\special{pa 810 1330}%
\special{pa 820 1366}%
\special{pa 825 1383}%
\special{pa 830 1399}%
\special{pa 840 1429}%
\special{pa 850 1457}%
\special{pa 860 1483}%
\special{pa 870 1507}%
\special{pa 885 1540}%
\special{pa 895 1560}%
\special{pa 915 1596}%
\special{pa 920 1604}%
\special{pa 925 1613}%
\special{pa 930 1620}%
\special{pa 935 1628}%
\special{pa 940 1635}%
\special{pa 945 1643}%
\special{pa 950 1649}%
\special{pa 960 1663}%
\special{pa 980 1687}%
\special{pa 985 1692}%
\special{pa 990 1698}%
\special{pa 1010 1718}%
\special{pa 1015 1722}%
\special{pa 1020 1727}%
\special{pa 1040 1743}%
\special{pa 1045 1746}%
\special{pa 1050 1750}%
\special{pa 1055 1753}%
\special{pa 1060 1757}%
\special{pa 1075 1766}%
\special{pa 1080 1768}%
\special{pa 1090 1774}%
\special{pa 1125 1788}%
\special{pa 1130 1789}%
\special{pa 1135 1791}%
\special{pa 1145 1793}%
\special{pa 1150 1795}%
\special{pa 1160 1797}%
\special{pa 1165 1797}%
\special{pa 1175 1799}%
\special{pa 1180 1799}%
\special{pa 1185 1800}%
\special{pa 1215 1800}%
\special{pa 1220 1799}%
\special{pa 1225 1799}%
\special{pa 1235 1797}%
\special{pa 1240 1797}%
\special{pa 1250 1795}%
\special{pa 1255 1793}%
\special{pa 1265 1791}%
\special{pa 1270 1789}%
\special{pa 1275 1788}%
\special{pa 1310 1774}%
\special{pa 1320 1768}%
\special{pa 1325 1766}%
\special{pa 1340 1757}%
\special{pa 1345 1753}%
\special{pa 1350 1750}%
\special{pa 1355 1746}%
\special{pa 1360 1743}%
\special{pa 1380 1727}%
\special{pa 1385 1722}%
\special{pa 1390 1718}%
\special{pa 1410 1698}%
\special{pa 1415 1692}%
\special{pa 1420 1687}%
\special{pa 1440 1663}%
\special{pa 1450 1649}%
\special{pa 1455 1643}%
\special{pa 1460 1635}%
\special{pa 1465 1628}%
\special{pa 1470 1620}%
\special{pa 1475 1613}%
\special{pa 1480 1604}%
\special{pa 1485 1596}%
\special{pa 1505 1560}%
\special{pa 1515 1540}%
\special{pa 1530 1507}%
\special{pa 1540 1483}%
\special{pa 1550 1457}%
\special{pa 1560 1429}%
\special{pa 1570 1399}%
\special{pa 1575 1383}%
\special{pa 1580 1366}%
\special{pa 1590 1330}%
\special{pa 1600 1290}%
\special{pa 1610 1246}%
\special{pa 1620 1196}%
\special{pa 1625 1168}%
\special{pa 1630 1139}%
\special{pa 1635 1107}%
\special{pa 1640 1073}%
\special{pa 1645 1035}%
\special{pa 1650 994}%
\special{pa 1655 949}%
\special{pa 1660 898}%
\special{pa 1665 840}%
\special{pa 1670 774}%
\special{pa 1675 695}%
\special{pa 1680 598}%
\special{pa 1685 473}%
\special{pa 1690 297}%
\special{pa 1692 200}%
\special{fp}%
%
\special{pn 8}%
\special{pa 400 1800}%
\special{pa 2000 1800}%
\special{fp}%
\special{sh 1}%
\special{pa 2000 1800}%
\special{pa 1933 1780}%
\special{pa 1947 1800}%
\special{pa 1933 1820}%
\special{pa 2000 1800}%
\special{fp}%
\special{pa 1200 2200}%
\special{pa 1200 200}%
\special{fp}%
\special{sh 1}%
\special{pa 1200 200}%
\special{pa 1180 267}%
\special{pa 1200 253}%
\special{pa 1220 267}%
\special{pa 1200 200}%
\special{fp}%
\put(12.6000,-19.6000){\makebox(0,0)[lb]{$O$}}%
\put(20.4000,-18.5000){\makebox(0,0)[lb]{$x$}}%
%
\special{pn 20}%
\special{pa 1200 1600}%
\special{pa 1200 1200}%
\special{fp}%
\special{sh 1}%
\special{pa 1200 1200}%
\special{pa 1180 1267}%
\special{pa 1200 1253}%
\special{pa 1220 1267}%
\special{pa 1200 1200}%
\special{fp}%
\special{pa 1400 1200}%
\special{pa 1400 800}%
\special{fp}%
\special{sh 1}%
\special{pa 1400 800}%
\special{pa 1380 867}%
\special{pa 1400 853}%
\special{pa 1420 867}%
\special{pa 1400 800}%
\special{fp}%
\special{pa 1000 1200}%
\special{pa 1000 800}%
\special{fp}%
\special{sh 1}%
\special{pa 1000 800}%
\special{pa 980 867}%
\special{pa 1000 853}%
\special{pa 1020 867}%
\special{pa 1000 800}%
\special{fp}%
\special{pa 800 800}%
\special{pa 800 400}%
\special{fp}%
\special{sh 1}%
\special{pa 800 400}%
\special{pa 780 467}%
\special{pa 800 453}%
\special{pa 820 467}%
\special{pa 800 400}%
\special{fp}%
\special{pa 1600 800}%
\special{pa 1600 400}%
\special{fp}%
\special{sh 1}%
\special{pa 1600 400}%
\special{pa 1580 467}%
\special{pa 1600 453}%
\special{pa 1620 467}%
\special{pa 1600 400}%
\special{fp}%
\end{picture}}}
\caption{The grim reaper}
\label{grim-reaper}
\end{minipage}
\begin{minipage}{0.49\columnwidth}
\centering
\scalebox{0.7}{{\unitlength 0.1in%
\begin{picture}(38.0000,23.0000)(2.0000,-24.0000)%
\special{pn 20}%
\special{pn 20}%
\special{pa 705 400}%
\special{pa 705 422}%
\special{ip}%
\special{pa 705 449}%
\special{pa 705 471}%
\special{ip}%
\special{pa 705 498}%
\special{pa 705 520}%
\special{ip}%
\special{pa 705 547}%
\special{pa 705 569}%
\special{ip}%
\special{pa 705 596}%
\special{pa 710 897}%
\special{pa 715 1073}%
\special{pa 720 1198}%
\special{pa 725 1295}%
\special{pa 730 1374}%
\special{pa 735 1440}%
\special{pa 740 1498}%
\special{pa 745 1549}%
\special{pa 750 1594}%
\special{pa 755 1635}%
\special{pa 760 1673}%
\special{pa 765 1707}%
\special{pa 770 1739}%
\special{pa 775 1768}%
\special{pa 780 1796}%
\special{pa 790 1846}%
\special{pa 800 1890}%
\special{pa 810 1930}%
\special{pa 820 1966}%
\special{pa 825 1983}%
\special{pa 830 1999}%
\special{pa 840 2029}%
\special{pa 850 2057}%
\special{pa 860 2083}%
\special{pa 870 2107}%
\special{pa 885 2140}%
\special{pa 895 2160}%
\special{pa 915 2196}%
\special{pa 920 2204}%
\special{pa 925 2213}%
\special{pa 930 2220}%
\special{pa 935 2228}%
\special{pa 940 2235}%
\special{pa 945 2243}%
\special{pa 950 2249}%
\special{pa 960 2263}%
\special{pa 980 2287}%
\special{pa 985 2292}%
\special{pa 990 2298}%
\special{pa 1010 2318}%
\special{pa 1015 2322}%
\special{pa 1020 2327}%
\special{pa 1040 2343}%
\special{pa 1045 2346}%
\special{pa 1050 2350}%
\special{pa 1055 2353}%
\special{pa 1060 2357}%
\special{pa 1075 2366}%
\special{pa 1080 2368}%
\special{pa 1090 2374}%
\special{pa 1125 2388}%
\special{pa 1130 2389}%
\special{pa 1135 2391}%
\special{pa 1145 2393}%
\special{pa 1150 2395}%
\special{pa 1160 2397}%
\special{pa 1165 2397}%
\special{pa 1175 2399}%
\special{pa 1180 2399}%
\special{pa 1185 2400}%
\special{pa 1215 2400}%
\special{pa 1220 2399}%
\special{pa 1225 2399}%
\special{pa 1235 2397}%
\special{pa 1240 2397}%
\special{pa 1250 2395}%
\special{pa 1255 2393}%
\special{pa 1265 2391}%
\special{pa 1270 2389}%
\special{pa 1275 2388}%
\special{pa 1310 2374}%
\special{pa 1320 2368}%
\special{pa 1325 2366}%
\special{pa 1340 2357}%
\special{pa 1345 2353}%
\special{pa 1350 2350}%
\special{pa 1355 2346}%
\special{pa 1360 2343}%
\special{pa 1380 2327}%
\special{pa 1385 2322}%
\special{pa 1390 2318}%
\special{pa 1410 2298}%
\special{pa 1415 2292}%
\special{pa 1420 2287}%
\special{pa 1440 2263}%
\special{pa 1450 2249}%
\special{pa 1455 2243}%
\special{pa 1460 2235}%
\special{pa 1465 2228}%
\special{pa 1470 2220}%
\special{pa 1475 2213}%
\special{pa 1480 2204}%
\special{pa 1485 2196}%
\special{pa 1505 2160}%
\special{pa 1515 2140}%
\special{pa 1530 2107}%
\special{pa 1540 2083}%
\special{pa 1550 2057}%
\special{pa 1560 2029}%
\special{pa 1570 1999}%
\special{pa 1575 1983}%
\special{pa 1580 1966}%
\special{pa 1590 1930}%
\special{pa 1600 1890}%
\special{pa 1610 1846}%
\special{pa 1620 1796}%
\special{pa 1625 1768}%
\special{pa 1630 1739}%
\special{pa 1635 1707}%
\special{pa 1640 1673}%
\special{pa 1645 1635}%
\special{pa 1650 1594}%
\special{pa 1655 1549}%
\special{pa 1660 1498}%
\special{pa 1665 1440}%
\special{pa 1670 1374}%
\special{pa 1675 1295}%
\special{pa 1680 1198}%
\special{pa 1685 1073}%
\special{pa 1690 897}%
\special{pa 1695 596}%
\special{pa 1695 400}%
\special{fp}%
\special{pn 20}%
\special{pn 20}%
\special{pa 2505 400}%
\special{pa 2505 420}%
\special{ip}%
\special{pa 2505 446}%
\special{pa 2505 466}%
\special{ip}%
\special{pa 2505 491}%
\special{pa 2505 512}%
\special{ip}%
\special{pa 2505 537}%
\special{pa 2505 557}%
\special{ip}%
\special{pa 2505 583}%
\special{pa 2505 596}%
\special{pa 2505 603}%
\special{ip}%
\special{pa 2506 628}%
\special{pa 2506 649}%
\special{ip}%
\special{pa 2506 674}%
\special{pa 2507 694}%
\special{ip}%
\special{pa 2507 720}%
\special{pa 2507 740}%
\special{ip}%
\special{pa 2508 765}%
\special{pa 2508 786}%
\special{ip}%
\special{pa 2509 811}%
\special{pa 2509 831}%
\special{ip}%
\special{pa 2509 857}%
\special{pa 2510 877}%
\special{ip}%
\special{pa 2510 902}%
\special{pa 2511 922}%
\special{ip}%
\special{pa 2511 948}%
\special{pa 2512 968}%
\special{ip}%
\special{pa 2513 993}%
\special{pa 2513 1014}%
\special{ip}%
\special{pa 2514 1039}%
\special{pa 2515 1059}%
\special{ip}%
\special{pa 2515 1085}%
\special{pa 2516 1105}%
\special{ip}%
\special{pa 2517 1130}%
\special{pa 2518 1151}%
\special{ip}%
\special{pa 2519 1176}%
\special{pa 2520 1196}%
\special{ip}%
\special{pa 2521 1222}%
\special{pa 2522 1242}%
\special{ip}%
\special{pa 2524 1267}%
\special{pa 2525 1287}%
\special{ip}%
\special{pa 2526 1313}%
\special{pa 2527 1333}%
\special{ip}%
\special{pa 2529 1358}%
\special{pa 2530 1374}%
\special{pa 2530 1379}%
\special{ip}%
\special{pa 2532 1404}%
\special{pa 2534 1424}%
\special{ip}%
\special{pa 2536 1449}%
\special{pa 2538 1470}%
\special{ip}%
\special{pa 2540 1495}%
\special{pa 2540 1498}%
\special{pa 2542 1515}%
\special{ip}%
\special{pa 2544 1540}%
\special{pa 2545 1549}%
\special{pa 2546 1560}%
\special{ip}%
\special{pa 2549 1586}%
\special{pa 2550 1594}%
\special{pa 2551 1606}%
\special{ip}%
\special{pa 2555 1631}%
\special{pa 2555 1635}%
\special{pa 2557 1651}%
\special{ip}%
\special{pa 2560 1676}%
\special{pa 2563 1696}%
\special{ip}%
\special{pa 2567 1721}%
\special{pa 2570 1739}%
\special{pa 2570 1741}%
\special{ip}%
\special{pa 2575 1766}%
\special{pa 2575 1768}%
\special{pa 2578 1786}%
\special{ip}%
\special{pa 2583 1811}%
\special{pa 2587 1831}%
\special{ip}%
\special{pa 2592 1856}%
\special{pa 2597 1876}%
\special{ip}%
\special{pa 2603 1901}%
\special{pa 2608 1920}%
\special{ip}%
\special{pa 2614 1945}%
\special{pa 2620 1964}%
\special{ip}%
\special{pa 2627 1989}%
\special{pa 2630 1999}%
\special{pa 2633 2008}%
\special{ip}%
\special{pa 2641 2032}%
\special{pa 2648 2051}%
\special{ip}%
\special{pa 2657 2075}%
\special{pa 2660 2083}%
\special{pa 2664 2094}%
\special{ip}%
\special{pa 2674 2117}%
\special{pa 2683 2135}%
\special{ip}%
\special{pa 2694 2158}%
\special{pa 2695 2160}%
\special{pa 2704 2176}%
\special{ip}%
\special{pa 2716 2198}%
\special{pa 2720 2204}%
\special{pa 2725 2213}%
\special{pa 2727 2215}%
\special{ip}%
\special{pa 2741 2236}%
\special{pa 2745 2243}%
\special{pa 2750 2249}%
\special{pa 2753 2253}%
\special{ip}%
\special{pa 2768 2273}%
\special{pa 2780 2287}%
\special{pa 2781 2288}%
\special{ip}%
\special{pa 2799 2307}%
\special{pa 2810 2318}%
\special{pa 2813 2321}%
\special{ip}%
\special{pa 2833 2337}%
\special{pa 2840 2343}%
\special{pa 2845 2346}%
\special{pa 2849 2349}%
\special{ip}%
\special{pa 2870 2363}%
\special{pa 2875 2366}%
\special{pa 2880 2368}%
\special{pa 2888 2373}%
\special{ip}%
\special{pa 2911 2383}%
\special{pa 2925 2388}%
\special{pa 2930 2389}%
\special{pa 2930 2389}%
\special{ip}%
\special{pa 2955 2396}%
\special{pa 2960 2397}%
\special{pa 2965 2397}%
\special{pa 2975 2399}%
\special{ip}%
\special{pa 3000 2400}%
\special{pa 3015 2400}%
\special{pa 3020 2399}%
\special{pa 3025 2399}%
\special{pa 3035 2397}%
\special{pa 3040 2397}%
\special{pa 3050 2395}%
\special{pa 3055 2393}%
\special{pa 3065 2391}%
\special{pa 3070 2389}%
\special{pa 3075 2388}%
\special{pa 3110 2374}%
\special{pa 3120 2368}%
\special{pa 3125 2366}%
\special{pa 3140 2357}%
\special{pa 3145 2353}%
\special{pa 3150 2350}%
\special{pa 3155 2346}%
\special{pa 3160 2343}%
\special{pa 3180 2327}%
\special{pa 3185 2322}%
\special{pa 3190 2318}%
\special{pa 3210 2298}%
\special{pa 3215 2292}%
\special{pa 3220 2287}%
\special{pa 3240 2263}%
\special{pa 3250 2249}%
\special{pa 3255 2243}%
\special{pa 3260 2235}%
\special{pa 3265 2228}%
\special{pa 3270 2220}%
\special{pa 3275 2213}%
\special{pa 3280 2204}%
\special{pa 3285 2196}%
\special{pa 3305 2160}%
\special{pa 3315 2140}%
\special{pa 3330 2107}%
\special{pa 3340 2083}%
\special{pa 3350 2057}%
\special{pa 3360 2029}%
\special{pa 3370 1999}%
\special{pa 3375 1983}%
\special{pa 3380 1966}%
\special{pa 3390 1930}%
\special{pa 3400 1890}%
\special{pa 3410 1846}%
\special{pa 3420 1796}%
\special{pa 3425 1768}%
\special{pa 3430 1739}%
\special{pa 3435 1707}%
\special{pa 3440 1673}%
\special{pa 3445 1635}%
\special{pa 3450 1594}%
\special{pa 3455 1549}%
\special{pa 3460 1498}%
\special{pa 3465 1440}%
\special{pa 3470 1374}%
\special{pa 3475 1295}%
\special{pa 3480 1198}%
\special{pa 3485 1073}%
\special{pa 3490 897}%
\special{pa 3495 596}%
\special{pa 3495 400}%
\special{fp}%
%
\special{pn 20}%
\special{pa 1700 400}%
\special{pa 3500 400}%
\special{fp}%
%
\special{pn 20}%
\special{pa 710 740}%
\special{pa 1710 740}%
\special{fp}%
%
\special{pn 20}%
\special{pa 1710 740}%
\special{pa 2510 740}%
\special{da 0.070}%
\special{pa 2510 740}%
\special{pa 2510 740}%
\special{da 0.070}%
%
\special{pn 20}%
\special{pa 1200 2400}%
\special{pa 3000 2400}%
\special{fp}%
\special{pn 20}%
\special{pn 20}%
\special{pa 2505 400}%
\special{pa 2505 422}%
\special{ip}%
\special{pa 2505 449}%
\special{pa 2505 471}%
\special{ip}%
\special{pa 2505 498}%
\special{pa 2505 520}%
\special{ip}%
\special{pa 2505 547}%
\special{pa 2505 569}%
\special{ip}%
\special{pn 20}%
\special{pa 2505 596}%
\special{pa 2506 660}%
\special{fp}%
\special{pa 2507 725}%
\special{pa 2508 788}%
\special{fp}%
\special{pa 2509 854}%
\special{pa 2510 897}%
\special{pa 2511 917}%
\special{fp}%
\special{pa 2512 982}%
\special{pa 2514 1046}%
\special{fp}%
\special{pa 2517 1111}%
\special{pa 2519 1175}%
\special{fp}%
\special{pa 2522 1240}%
\special{pa 2525 1295}%
\special{pa 2526 1303}%
\special{fp}%
\special{pa 2530 1368}%
\special{pa 2530 1374}%
\special{pa 2534 1432}%
\special{fp}%
\special{pa 2540 1497}%
\special{pa 2540 1498}%
\special{pa 2545 1549}%
\special{pa 2546 1560}%
\special{fp}%
\special{pa 2554 1625}%
\special{pa 2555 1635}%
\special{pa 2560 1673}%
\special{pa 2562 1688}%
\special{fp}%
\special{pa 2572 1752}%
\special{pa 2575 1768}%
\special{pa 2580 1796}%
\special{pa 2584 1815}%
\special{fp}%
\special{pa 2597 1878}%
\special{pa 2600 1890}%
\special{pa 2610 1930}%
\special{pa 2613 1940}%
\special{fp}%
\special{pa 2631 2003}%
\special{pa 2640 2029}%
\special{pa 2650 2057}%
\special{pa 2652 2063}%
\special{fp}%
\special{pa 2677 2123}%
\special{pa 2685 2140}%
\special{pa 2695 2160}%
\special{pa 2706 2180}%
\special{fp}%
\special{pa 2740 2235}%
\special{pa 2740 2235}%
\special{pa 2745 2243}%
\special{pa 2750 2249}%
\special{pa 2760 2263}%
\special{pa 2779 2285}%
\special{fp}%
\special{pa 2825 2331}%
\special{pa 2840 2343}%
\special{pa 2845 2346}%
\special{pa 2850 2350}%
\special{pa 2855 2353}%
\special{pa 2860 2357}%
\special{pa 2875 2366}%
\special{pa 2877 2367}%
\special{fp}%
\special{pa 2937 2391}%
\special{pa 2945 2393}%
\special{pa 2950 2395}%
\special{pa 2960 2397}%
\special{pa 2965 2397}%
\special{pa 2975 2399}%
\special{pa 2980 2399}%
\special{pa 2985 2400}%
\special{pa 3000 2400}%
\special{fp}%
\special{pn 20}%
\special{pa 3020 2399}%
\special{pa 3025 2399}%
\special{pa 3035 2397}%
\special{pa 3040 2397}%
\special{pa 3045 2396}%
\special{ip}%
\special{pa 3065 2391}%
\special{pa 3065 2391}%
\special{pa 3070 2389}%
\special{pa 3075 2388}%
\special{pa 3089 2383}%
\special{ip}%
\special{pa 3108 2375}%
\special{pa 3110 2374}%
\special{pa 3120 2368}%
\special{pa 3125 2366}%
\special{pa 3130 2363}%
\special{ip}%
\special{pa 3147 2352}%
\special{pa 3150 2350}%
\special{pa 3155 2346}%
\special{pa 3160 2343}%
\special{pa 3167 2337}%
\special{ip}%
\special{pa 3183 2324}%
\special{pa 3185 2322}%
\special{pa 3190 2318}%
\special{pa 3201 2307}%
\special{ip}%
\special{pa 3215 2292}%
\special{pa 3220 2287}%
\special{pa 3232 2273}%
\special{ip}%
\special{pa 3244 2257}%
\special{pa 3250 2249}%
\special{pa 3255 2243}%
\special{pa 3259 2236}%
\special{ip}%
\special{pa 3270 2219}%
\special{pa 3275 2213}%
\special{pa 3280 2204}%
\special{pa 3284 2198}%
\special{ip}%
\special{pa 3294 2180}%
\special{pa 3305 2160}%
\special{pa 3306 2158}%
\special{ip}%
\special{pa 3315 2140}%
\special{pa 3326 2117}%
\special{ip}%
\special{pa 3334 2098}%
\special{pa 3340 2083}%
\special{pa 3343 2075}%
\special{ip}%
\special{pa 3350 2056}%
\special{pa 3359 2032}%
\special{ip}%
\special{pa 3365 2013}%
\special{pa 3370 1999}%
\special{pa 3373 1989}%
\special{ip}%
\special{pa 3379 1969}%
\special{pa 3380 1966}%
\special{pa 3386 1945}%
\special{ip}%
\special{pa 3391 1925}%
\special{pa 3397 1901}%
\special{ip}%
\special{pa 3402 1881}%
\special{pa 3408 1856}%
\special{ip}%
\special{pa 3412 1836}%
\special{pa 3417 1811}%
\special{ip}%
\special{pa 3421 1791}%
\special{pa 3425 1768}%
\special{pa 3425 1766}%
\special{ip}%
\special{pa 3429 1746}%
\special{pa 3430 1739}%
\special{pa 3433 1721}%
\special{ip}%
\special{pa 3436 1701}%
\special{pa 3440 1676}%
\special{ip}%
\special{pa 3442 1656}%
\special{pa 3445 1635}%
\special{pa 3445 1631}%
\special{ip}%
\special{pa 3448 1611}%
\special{pa 3450 1594}%
\special{pa 3451 1586}%
\special{ip}%
\special{pa 3453 1565}%
\special{pa 3455 1549}%
\special{pa 3456 1540}%
\special{ip}%
\special{pa 3458 1520}%
\special{pa 3460 1498}%
\special{pa 3460 1495}%
\special{ip}%
\special{pa 3462 1475}%
\special{pa 3464 1449}%
\special{ip}%
\special{pa 3466 1429}%
\special{pa 3468 1404}%
\special{ip}%
\special{pa 3469 1384}%
\special{pa 3470 1374}%
\special{pa 3471 1358}%
\special{ip}%
\special{pa 3472 1338}%
\special{pa 3474 1313}%
\special{ip}%
\special{pa 3475 1292}%
\special{pa 3476 1267}%
\special{ip}%
\special{pa 3477 1247}%
\special{pa 3479 1222}%
\special{ip}%
\special{pa 3480 1201}%
\special{pa 3480 1198}%
\special{pa 3481 1176}%
\special{ip}%
\special{pa 3482 1156}%
\special{pa 3483 1130}%
\special{ip}%
\special{pa 3484 1110}%
\special{pa 3485 1085}%
\special{ip}%
\special{pa 3485 1064}%
\special{pa 3486 1039}%
\special{ip}%
\special{pa 3487 1019}%
\special{pa 3487 993}%
\special{ip}%
\special{pa 3488 973}%
\special{pa 3489 948}%
\special{ip}%
\special{pa 3489 928}%
\special{pa 3490 902}%
\special{ip}%
\special{pa 3490 882}%
\special{pa 3491 857}%
\special{ip}%
\special{pa 3491 836}%
\special{pa 3491 811}%
\special{ip}%
\special{pa 3492 791}%
\special{pa 3492 765}%
\special{ip}%
\special{pa 3493 745}%
\special{pa 3493 720}%
\special{ip}%
\special{pa 3493 699}%
\special{pa 3494 674}%
\special{ip}%
\special{pa 3494 654}%
\special{pa 3494 628}%
\special{ip}%
\special{pa 3495 608}%
\special{pa 3495 596}%
\special{pa 3495 583}%
\special{ip}%
\special{pa 3495 562}%
\special{pa 3495 537}%
\special{ip}%
\special{pa 3495 517}%
\special{pa 3495 491}%
\special{ip}%
\special{pa 3495 471}%
\special{pa 3495 446}%
\special{ip}%
\special{pa 3495 425}%
\special{pa 3495 400}%
\special{ip}%
%
\special{pn 13}%
\special{pa 800 600}%
\special{pa 800 400}%
\special{fp}%
\special{sh 1}%
\special{pa 800 400}%
\special{pa 780 467}%
\special{pa 800 453}%
\special{pa 820 467}%
\special{pa 800 400}%
\special{fp}%
\special{pa 1300 600}%
\special{pa 1300 400}%
\special{fp}%
\special{sh 1}%
\special{pa 1300 400}%
\special{pa 1280 467}%
\special{pa 1300 453}%
\special{pa 1320 467}%
\special{pa 1300 400}%
\special{fp}%
\special{pa 1900 300}%
\special{pa 1900 100}%
\special{fp}%
\special{sh 1}%
\special{pa 1900 100}%
\special{pa 1880 167}%
\special{pa 1900 153}%
\special{pa 1920 167}%
\special{pa 1900 100}%
\special{fp}%
\special{pa 2400 300}%
\special{pa 2400 100}%
\special{fp}%
\special{sh 1}%
\special{pa 2400 100}%
\special{pa 2380 167}%
\special{pa 2400 153}%
\special{pa 2420 167}%
\special{pa 2400 100}%
\special{fp}%
\special{pa 2900 300}%
\special{pa 2900 100}%
\special{fp}%
\special{sh 1}%
\special{pa 2900 100}%
\special{pa 2880 167}%
\special{pa 2900 153}%
\special{pa 2920 167}%
\special{pa 2900 100}%
\special{fp}%
\special{pa 3400 300}%
\special{pa 3400 100}%
\special{fp}%
\special{sh 1}%
\special{pa 3400 100}%
\special{pa 3380 167}%
\special{pa 3400 153}%
\special{pa 3420 167}%
\special{pa 3400 100}%
\special{fp}%
\end{picture}}}
\caption{The grim hyperplane}
\label{grim-hyperplane}
\end{minipage}
\end{figure}

In this paper, we consider that the case where $N$ is the $n$-dimensional unit sphere $\mathbb{S}^n$ and $u$ is a composition of an isoparametric function on $\mathbb{S}^n$ and some function.
The level sets of the isoparametric functions give compact isoparametric hypersurfaces of $\mathbb{S}^n$.
M\"{u}nzner\cite{M} showed that the number $k$ of distinct principal curvatures of compact isoparametric hypersurfaces of $\mathbb{S}^n$ is $1$, $2$, $3$, $4$ or $6$ by a topological method.
In cases $k=1$, $2$, $3$, Cartan\cite{C} classified the isoparametric hypersurfaces.
The hypersurfaces are $S^{n-1}\subset S^n$ in case $k=1$, $S^k\times S^{n-k-1}\subset S^n$ in case $k=2$ and the tubes over the Veronese surfaces $\mathbb{R}P^2\subset S^4$, $\mathbb{C}P^2\subset S^7$, $\mathbb{Q}P^2\subset S^{13}$, $\mathbb{O}P^2\subset S^{25}$ (i.e., the principal orbits of the isotropy representations of the rank two symmetric spaces $SU(3)/SO(3)$, $(SU(3)\times SU(3))/SU(3)$, $SU(6)/Sp(3)$, $E_6/F_4$) in case $k=3$.
These hypersurfaces are homogeneous.
In case $k=6$, the hypersurfaces are homogeneous by the result of Dorfmeister and Neher\cite{DN} and Miyaoka\cite{Mi}.
The hypersurfaces are the principal orbits of the isotropy representations of $(G_2\times G_2)/G_2$, $G_2/SO(4)$.
In case $k=4$, Ozeki and Takeuchi\cite{OT1,OT2} found that non-homogeneous isoparametric hypersurfaces are constructed as the regular level sets of the restrictions of the Cartin-M\"{u}nzner polynomial functions to the sphere.

In this paper, we obtain the following result.
\begin{theorem}\label{thm:shape-graph}
Let $r$ be an isoparametric function on $\mathbb{S}^n$~$(n\ge2)$ and $V$ be a $C^{\infty}$-$function$ on an interval $J\subset r(\mathbb{S}^n)$.
If the mean curvature flow starting from the graph of the function $u=(V\circ r)\vert_{r^{-1}(J)}$ is a translating soliton, the shape of the graph of $V$ is like one of  those defined in {\rm Figures \ref{Vex1}}$-${\rm \ref{Vex7}}.
The real number $R\in (-1,1)$ in {\rm Figures \ref{Vex1}}$-${\rm \ref{Vex7}} is given by
\begin{equation*}
R:=
\begin{cases}
0\hspace{2.125cm}(k=1,3,6)\\
\displaystyle -1+\frac{km}{n-1}\quad(k=2,4),
\end{cases}
\end{equation*}
where $k$ is the number of distinct principal curvatures of the compact isoparametric hypersurface defined by the regular level set for $r$ and $m$ is the multiplicity of the smallest principal curvature of the isoparametric hypersurface.
\end{theorem}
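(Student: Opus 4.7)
My plan is to reduce the translator PDE on $\mathbb{S}^n\times\mathbb{R}$ to an ordinary differential equation in $V$ by exploiting the isoparametric hypothesis, and then to classify solutions by a qualitative phase-line analysis that naturally produces the seven admissible silhouettes.

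\textbf{Reduction to an ODE.} A graph $x\mapsto(x,u(x))$ in $\mathbb{S}^n\times\mathbb{R}$ is a translator for the vertical Killing field $(0,1)$ if and only if
\[
\operatorname{div}_{\mathbb{S}^n}\!\left(\frac{\nabla u}{\sqrt{1+|\nabla u|^2}}\right)=\frac{1}{\sqrt{1+|\nabla u|^2}}.
\]
Substituting $u=V\circ r$ gives $\nabla u=V'(r)\nabla r$ and $|\nabla u|^2=V'(r)^2|\nabla r|^2$. Because $r$ is isoparametric, both $|\nabla r|^2$ and $\Delta r$ depend only on $r$; call them $A(r)$ and $B(r)$. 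Expanding the divergence, the PDE collapses to a first-order ODE in $p:=V'$,
\[
\frac{p'(r)\,A(r)}{1+p(r)^2\,A(r)}+p(r)\,B(r)=1,\qquad r\in J.
\]

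\textbf{Singular analysis and the constant $R$.} The range $r(\mathbb{S}^n)$ has as endpoints the two focal values, where $A(r)=|\nabla r|^2$ vanishes. Using M\"{u}nzner's standard formulas for $|\nabla r|^2$ and $\Delta r$ on a Cartan--M\"{u}nzner polynomial, I compute that near a focal value $B(r)/A(r)$ has a simple pole whose residue depends only on $k$ and on the multiplicities $m_1,m_2$ of the principal curvatures at the two focal sets; that residue is exactly $R=-1+km/(n-1)$, which vanishes whenever $m_1=m_2$, i.e.\ in the cases $k=1,3,6$. Hence $R$ is the index that governs the singular behaviour of the ODE at each focal end, and the same residue controls the analogous asymptotic expansion appearing in the bowl soliton analysis of Clutterbuck--Schn\"{u}rer--Schulze.

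\textbf{Phase-line classification.} The equation $p'=F(r,p)$ has smooth coefficients on the open interval and a regular-singular point at each focal endpoint with index $R$. A standard local study at each singular endpoint classifies the trajectories by (a) the sign of $p$ on $J$, (b) whether $p$ stays bounded as $r$ approaches an endpoint of $J$, and (c) whether $J$ can be extended so as to reach a focal value. Each admissible combination produces a distinct qualitative profile for $V$, and integrating $V'=p$ recovers $V$ up to an additive constant; matching the resulting profiles to Figures~\ref{Vex1}--\ref{Vex7} exhausts the list.

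\textbf{Main obstacle.} The delicate step is (c): deciding, for each sign of $p$, whether the singular behaviour of the ODE forces $V$ to blow up, to admit a finite limit, or to terminate $J$ strictly inside $r(\mathbb{S}^n)$. This trichotomy hinges on the sign of $R$, which is why the theorem is organised around that number. Once the singular asymptotics are tabulated and the possible sign configurations of $p$ enumerated, matching trajectories to figures is a mechanical bookkeeping exercise.
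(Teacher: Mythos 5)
Your overall strategy (reduce the translator equation to an ODE via the isoparametric structure, then classify solutions qualitatively) is the same as the paper's, but two of your concrete steps are wrong and a third hides the actual difficulty. First, your reduced ODE is incorrect. Expanding
\[
\sqrt{1+\|\nabla u\|^2}\;\mathrm{div}\!\left(\frac{\nabla u}{\sqrt{1+\|\nabla u\|^2}}\right)=\Delta u-\frac{1}{2(1+\|\nabla u\|^2)}\nabla u(\|\nabla u\|^2)
\]
with $u=V\circ r$ gives, besides $\frac{AV''}{1+AV'^2}+BV'$, the extra term $-\frac{AA'V'^3}{2(1+AV'^2)}$, because $\nabla u(\|\nabla u\|^2)=AV'^2(2AV''+A'V')$ and $A(r)=k^2(1-r^2)$ has $A'\neq 0$ on the sphere (unlike the Euclidean radial case, where $\|\nabla r\|^2\equiv 1$). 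That cubic term is not a correction you can ignore: it is exactly what makes the substitution $\psi=k\sqrt{1-r^2}\,V'$ factor the equation into the form (\ref{eq:isopara-soliton-sphere3}), on which the whole classification rests. Second, your identification of $R$ is wrong. The residue of $B/A$ at the focal value $r=1$ equals $\bigl((n+k-1)-(n-1)R\bigr)/(2k)$, not $R$. In the paper, $(n-1)R=\tfrac{m_2-m_1}{2}k$ is the unique \emph{interior} zero of the coefficient of the cubic term, equivalently the location of the vertical asymptote of the nullcline $\eta(r)=-\sqrt{1-r^2}/\bigl((n-1)(r-R)\bigr)$ along which $\psi'=0$; the figures are organised around the interior line $r=R$, not around singular indices at the focal endpoints.

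Third, the classification mechanism you sketch would not deliver the seven types. The decisive phenomenon is not a regular-singular (Frobenius-type) analysis at the focal endpoints but finite blow-up of $V'$ at points \emph{strictly inside} $(-1,1)$, driven by the cubic nonlinearity: for instance, if $\psi(r_0)>0$ for some $r_0\in(R,1)$ then $\psi'>\frac{(n-1)(r-R)}{k(1-r^2)}\psi^3$, and integrating $\psi'/\psi^3$ forces $\psi\to+\infty$ at some $r_1<1$ (Lemma \ref{thm:psi-shape1}); the complementary cases require analogous explicit comparison integrals of $\psi'/(\psi^2+1)$ and $\psi'/\psi^2$ against the nullcline (Lemmas \ref{thm:psi-shape2}--\ref{thm:psi-shape7}). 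This interior blow-up, not the endpoint singularity, is why most types terminate strictly inside $r(\mathbb{S}^n)$, and the trichotomy is governed by the position of $\psi$ relative to $\eta$, not by the sign of $R$. Finally, you omit two necessary ingredients for the boundary types VI and VII: the existence of solutions with $\psi(\pm 1)=0$ is not automatic and the paper proves it by a connectedness argument (the unlabeled lemma following Proposition \ref{thm:graph-psi}), and the boundary values $V'(\pm 1)=\mp\frac{1}{k(k+(n-1)(1\mp R))}$ require a separate l'H\^opital computation (Lemmas \ref{thm:psi-shape4} and \ref{thm:psi-shape8}).
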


\begin{figure}[H]
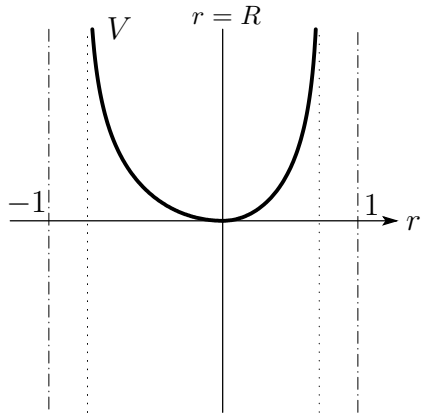

\centering
\scalebox{1.0}{{\unitlength 0.1in%
}}
\caption{The graph of $V$ (Type VII)}
\label{Vex7}
\end{minipage}
\end{figure}


The function $u=(V\circ r)\vert_{r^{-1}(J)}$ in Theorem \ref{thm:shape-graph} is constant on the level set of $r$ and its behavior on the normal direction for the level set of $r$ is a little understood from the behavior of $V$ in {\rm Figures \ref{Vex1}}$-${\rm \ref{Vex7}}.
In the last section, we investigate the domain of the function $u$ in Theorem \ref{thm:shape-graph}.

\section{Basic facts}

Let $g$ be a Riemannian metric of a $n$-dimensional Riemannian manifold $N$ and $u:M\to\mathbb{R}$ be a function on a domain $M\subset N$.
Define the immersion $f$ of $M$ into the product Riemannian manifold $N\times\mathbb{R}$ by $f(x)=(x,u(x)),~x\in M$.
Denote the graph of $u$ by $\Gamma$ and the mean curvature vector field of $f$ by $H$.
Further, we assume that $X$ is a Killing vector field on $N\times\mathbb{R}$ and $\{\phi_t\}_{t\in\mathbb{R}}$ is the one-parameter transformation associated to $X$ on $N\times\mathbb{R}$.
Then, we have the following lemma about the soliton of the mean curvature flow.
\begin{lemma}\label{lemma:soliton-condition}
If the mean curvature flow starting from $\Gamma$ is $X$-soliton, $f$ satisfies
\begin{equation}
\left(X\circ f\right)^{\bot_f}=H.\label{eq:soliton0}
\end{equation}
Conversely, if $f$ satisfies {\rm (\ref{eq:soliton0})}, the familly of the images $\{M_t\}_{t\in\mathbb{R}}$ defined by $f_t=\phi_t\circ f$ and $M_t=f_t(M)$ is the $X$-soliton.
\end{lemma}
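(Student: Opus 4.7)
The plan is to prove both directions by a direct chain-rule computation, exploiting two features of the setup: (i) since each $\phi_t$ is an isometry, its differential commutes with the operation of taking normal components, and (ii) since $X$ generates the flow $\{\phi_t\}$, the Killing field satisfies $(d\phi_t)(X\circ g) = X\circ(\phi_t\circ g)$ for any map $g$ into $N\times\mathbb{R}$.

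For the forward direction, I would start from the relation $f_t = \phi_t\circ\widetilde{f}_t$ and differentiate in $t$ to get
\begin{equation*}
\frac{\partial f_t}{\partial t}
= (X\circ\phi_t)\circ\widetilde{f}_t + (d\phi_t)\!\left(\frac{\partial \widetilde{f}_t}{\partial t}\right)
= X\circ f_t + (d\phi_t)\!\left(\frac{\partial \widetilde{f}_t}{\partial t}\right).
\end{equation*}
Taking the normal component with respect to $f_t$, I use that $\phi_t$ is an isometry, hence $d\phi_t$ carries the normal bundle of $\widetilde{f}_t$ to that of $f_t$, to conclude
\begin{equation*}
H_t = \left(\frac{\partial f_t}{\partial t}\right)^{\!\bot_{f_t}} = (X\circ f_t)^{\bot_{f_t}} + (d\phi_t)\!\left(\left(\frac{\partial \widetilde{f}_t}{\partial t}\right)^{\!\bot_{\widetilde{f}_t}}\right).
\end{equation*}
The soliton assumption (\ref{eq:mcf-soliton}) kills the second term, and evaluating at $t=0$ gives (\ref{eq:soliton0}).

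For the converse, I would set $f_t := \phi_t\circ f$ and $\widetilde{f}_t := \phi_t^{-1}\circ f_t = f$. Then $\partial_t\widetilde{f}_t \equiv 0$, so the soliton identity (\ref{eq:mcf-soliton}) is trivial once I show that $\{f_t\}$ is an actual mean curvature flow starting from $f$. Since $\phi_t$ is an isometry, the mean curvature vector of $f_t$ is $H_t = (d\phi_t)(H)$. On the other hand, $\partial_t f_t = X\circ\phi_t\circ f = X\circ f_t$, so
\begin{equation*}
\left(\frac{\partial f_t}{\partial t}\right)^{\!\bot_{f_t}} = (X\circ f_t)^{\bot_{f_t}} = \bigl((d\phi_t)(X\circ f)\bigr)^{\bot_{f_t}} = (d\phi_t)\!\left((X\circ f)^{\bot_f}\right) = (d\phi_t)(H) = H_t,
\end{equation*}
where the second equality uses $X\circ f_t = (d\phi_t)(X\circ f)$ (consequence of $X$ being the generator of $\phi_t$), the third uses the isometry property, and the fourth is the hypothesis (\ref{eq:soliton0}).

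The argument is essentially bookkeeping, so no single step is a true obstacle; the only point that deserves care is the interchange of $d\phi_t$ with the normal projection $(\cdot)^{\bot}$, which must be justified by invoking that $\phi_t$ is an isometry (and therefore sends the tangent and normal bundles of $\widetilde{f}_t$ isomorphically onto those of $f_t=\phi_t\circ\widetilde{f}_t$). Once this is stated clearly, both implications follow in a couple of lines.
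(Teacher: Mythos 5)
Your proof is correct and the converse direction is exactly the computation the paper gives, namely $\bigl(\partial_t f_t\bigr)^{\bot_{f_t}}-H_t=d\phi_t\bigl((X\circ f)^{\bot_f}-H\bigr)=0$ together with $\phi_t^{-1}\circ f_t=f$. For the forward direction the paper simply cites Hungerb\"{u}hler--Smoczyk, and your chain-rule argument (differentiating $f_t=\phi_t\circ\widetilde{f}_t$, commuting $d\phi_t$ with the normal projection via the isometry property, and evaluating at $t=0$) is the standard proof of that cited fact, so the two treatments agree in substance.
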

\begin{proof}
According to Hungerb\"{u}hler and Smoczyk\cite{HS}, we find the first half of the lemma.
For the second half of the lemma, since $\phi_t$'s are isometries and $f$ satisfies {\rm (\ref{eq:soliton0})}, we find that $f_t=\phi_t\circ f$ satisfies
\begin{align*}
\left(\frac{\partial f_t}{\partial t}\right)^{\bot_{f_t}}-H_t&=d\phi_t((X\circ f)^{\bot_f}-H)\\
&=0,
\end{align*}
and $\{f_t\}_{t\in \mathbb{R}}$ satisfies (\ref{eq:mcf}).
Therefore, $\{M_t\}_{t\in\mathbb{R}}$ is the mean curvature flow.
Further, by $\phi_t^{-1}\circ f_t=f$, it turns out that $f_t$ satisfies (\ref{eq:mcf-soliton}).
So, $\{M_t\}_{t\in\mathbb{R}}$ is the translating soliton.
\end{proof}

Let $\nabla$ and div be the gradient and divergence with respect to $g$ respectively.
For Lemma \ref{lemma:soliton-condition}, considering the case where an $X$-soliton is a translating soliton, the following lemma is derived.
\begin{lemma}\label{lemma:graph-soliton-condition}
If the mean curvature flow starting from $\Gamma$ is a translating soliton, $u$ satisfies
\begin{equation}
\sqrt{1+\|\nabla u\|^2}~{\rm div}\left(\frac{\nabla u}{\sqrt{1+\|\nabla u\|^2}}\right)=1.\label{eq:graph-soliton}
\end{equation}
Conversely, if $u$ satisfies {\rm (\ref{eq:graph-soliton})}, the family of the images $\{M_t\}_{t\in\mathbb{R}}$ definded by $f_t(x)=(x,u(x)+t),~x\in M$ and $M_t=f_t(M)$ is a translating soliton.
\end{lemma}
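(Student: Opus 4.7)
The plan is to specialize Lemma \ref{lemma:soliton-condition} to the Killing vector field $X=(0,1)\in T(N\times\mathbb{R})$ and then unpack the condition $(X\circ f)^{\bot_f}=H$ explicitly in terms of $u$. The key observations are that the tangent space of the graph at $f(x)$ is spanned by vectors of the form $(V,du(V))$, $V\in T_xM$, so a unit normal is
\begin{equation*}
\nu=\frac{1}{\sqrt{1+\|\nabla u\|^2}}\,(-\nabla u,\,1),
\end{equation*}
as can be checked directly using $du(V)=g(\nabla u,V)$. Since $X\circ f=(0,1)$ is constant, its normal component is obtained by the single inner product $\langle (0,1),\nu\rangle=\frac{1}{\sqrt{1+\|\nabla u\|^2}}$, and therefore
\begin{equation*}
(X\circ f)^{\bot_f}=\frac{1}{\sqrt{1+\|\nabla u\|^2}}\,\nu.
\end{equation*}

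Next I would write $H=h\,\nu$ for some scalar mean curvature $h$. For a graph in $N\times\mathbb{R}$, the standard computation (choosing a local orthonormal frame on $M$, computing the second fundamental form $B(V,W)=(\nabla^{N\times\mathbb{R}}_{df(V)}df(W))^{\bot_f}$, and tracing) yields the familiar formula
\begin{equation*}
h=\mathrm{div}\!\left(\frac{\nabla u}{\sqrt{1+\|\nabla u\|^2}}\right),
\end{equation*}
which is the same as in the Euclidean case because the product metric splits and the $\mathbb{R}$-factor contributes no extra curvature. Substituting these expressions for $(X\circ f)^{\bot_f}$ and $H$ into the soliton equation (\ref{eq:soliton0}) and cancelling the common factor $\nu$, one obtains
\begin{equation*}
\frac{1}{\sqrt{1+\|\nabla u\|^2}}=\mathrm{div}\!\left(\frac{\nabla u}{\sqrt{1+\|\nabla u\|^2}}\right),
\end{equation*}
which rearranges immediately to (\ref{eq:graph-soliton}). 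This proves the forward direction.

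For the converse, I would identify the one-parameter group associated with $X=(0,1)$ as $\phi_t(x,s)=(x,s+t)$, so that $f_t(x)=\phi_t\circ f(x)=(x,u(x)+t)$ is exactly the family of graphs prescribed in the statement. Assuming (\ref{eq:graph-soliton}), the computation above runs in reverse and shows that $f$ satisfies (\ref{eq:soliton0}); then the second half of Lemma \ref{lemma:soliton-condition} (applied with this $X$) gives that $\{M_t\}_{t\in\mathbb{R}}$ is the $X$-soliton, i.e.\ a translating soliton, as required.

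The only non-routine point is the derivation of $h=\mathrm{div}(\nabla u/\sqrt{1+\|\nabla u\|^2})$ in the product-manifold setting. I would either refer to a standard source or give the short trace computation in an ambient orthonormal frame; once that formula is in hand, everything else is pure algebra and Lemma \ref{lemma:soliton-condition}.
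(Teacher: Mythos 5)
Your proposal is correct and follows essentially the same route as the paper: both specialize Lemma \ref{lemma:soliton-condition} to $X=\partial/\partial s$, compute $(X\circ f)^{\bot_f}$ and $H$ for the graph (the paper writes the common normal direction as $\frac{\partial}{\partial s}-\frac{1}{1+\|\nabla u\|^2}df(\nabla u)$, which is exactly $\frac{1}{\sqrt{1+\|\nabla u\|^2}}\,\nu$ in your notation), and equate to get (\ref{eq:graph-soliton}). The only difference is presentational — you normalize to a unit normal and spell out the converse via $\phi_t(x,s)=(x,s+t)$, which the paper leaves implicit.
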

\begin{proof}
Let $(x^1,\cdots ,x^n,s)$ be local cordinates of $N\times\mathbb{R}$.
Define the Killing vector $X=(0,1)\in T(N\times\mathbb{R})=TN\oplus T\mathbb{R}$.
By $f(x)=(x,u(x)),~x\in M$ and $X=\frac{\partial}{\partial s}$, we find
\begin{align*}
&\left(X\circ f\right)^{\bot_f}=\frac{\partial}{\partial s}-\frac{1}{1+\|\nabla u\|^2}df(\nabla u)\\
&H=\sqrt{1+\|\nabla u\|^2}~{\rm div}\left(\frac{\nabla u}{\sqrt{1+\|\nabla u\|^2}}\right)\left(\frac{\partial}{\partial s}-\frac{1}{1+\|\nabla u\|^2}df(\nabla u)\right).
\end{align*}
Therefore, we obtain that (\ref{lemma:soliton-condition}) and (\ref{eq:graph-soliton}) are equivalent in this case.
\end{proof}

Next, we consider the case where $u$ is a composition of an isoparametric function and some function.
Let $\Delta$ be the Laplacian with respect to $g$.
A non-constant $C^{\infty}$-function $r:N\to\mathbb{R}$ is called an isoparametric function if there exist $C^{\infty}$-functions $\alpha,\beta$ such that
\begin{equation*}
\begin{cases}
\|\nabla r\|^2=\alpha\circ r\\
\Delta r=\beta\circ r.
\end{cases}
\end{equation*}
Further, the level set of $r$ with respect to a regular value is called an isoparametric hypersurface.

In case where $N$ is the $n$-dimensional unit spere $\mathbb{S}^n$, M\"{u}nzner\cite{M} showed the following theorem for an isoparametric function on $\mathbb{S}^n$.
\begin{theorem}
{\rm (M\"{u}nzner\cite{M})}
(i) An isoparametric function $r$ on $\mathbb{S}^n$ is a restriction to $\mathbb{S}^n$ of a homogeneous polynomial $h:\mathbb{R}^{n+1}\to\mathbb{R}$ which satisfies
\begin{equation}\label{eq:CMpol}
\begin{cases}
\displaystyle\|(\nabla^{\mathbb{R}}h)_x\|^2=k^2\vert x\vert^{2k-2}\\
\displaystyle(\Delta_{\mathbb{R}}h)_x=\frac{m_2-m_1}{2}k^2\vert x\vert^{k-2}
\end{cases}\quad (x\in\mathbb{R}^{n+1}),
\end{equation}
where $\vert\bullet\vert$ is the Euclidean norm and $\nabla^{\mathbb{R}}$ and $\Delta_{\mathbb{R}}$ are the gradient and Laplacian for the Euclidean space $\mathbb{R}^n$.
Here, we assume that the isoparametric hypersurface defined by the level set of $r$ has $k$ distinct principal curvatures $\lambda_1>\cdots>\lambda_k$ with respective multiplicities $m_1,\cdots,m_k$.

(ii) The above natural number $k$ is $1$, $2$, $3$, $4$ or $6$.
\end{theorem}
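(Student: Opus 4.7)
The plan is to collapse the translating-soliton PDE into a single ODE for $V$ using the isoparametric hypothesis, and then qualitatively enumerate the possible profiles of $V$.

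First, by Lemma \ref{lemma:graph-soliton-condition} the translating-soliton condition is equivalent to (\ref{eq:graph-soliton}). For $u = V \circ r$ one has $\nabla u = V'(r)\,\nabla r$ and $\|\nabla u\|^2 = (V'(r))^2\,\alpha(r)$, where $\alpha(r) := \|\nabla r\|^2$ and $\beta(r) := \Delta r$ depend only on $r$ by the definition of an isoparametric function. Introducing the auxiliary variable $\psi(r) = V'(r)/\sqrt{1+(V'(r))^2\alpha(r)}$ and using $\operatorname{div}\!\bigl(\psi(r)\,\nabla r\bigr) = \psi'(r)\,\alpha(r) + \psi(r)\,\beta(r)$, equation (\ref{eq:graph-soliton}) becomes the scalar ODE
\begin{equation*}
\sqrt{1+(V'(r))^2\alpha(r)}\,\bigl[\psi'(r)\,\alpha(r) + \psi(r)\,\beta(r)\bigr] = 1,
\end{equation*}
which, after expanding $\psi'$, takes the explicit form $V''\,\alpha = (1 - V'\beta)\bigl(1+(V')^2\alpha\bigr) + \tfrac{1}{2}(V')^3\,\alpha'\,\alpha$.

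Second, on $\mathbb{S}^n$ Münzner's theorem combined with the radial-spherical decomposition of the Euclidean Laplacian gives the explicit formulas
\begin{equation*}
\alpha(r) = k^2(1-r^2), \qquad \beta(r) = \tfrac{1}{2}(m_2 - m_1)k^2 - k(n+k-1)r.
\end{equation*}
The mean curvature of the level hypersurface $r^{-1}(c) \subset \mathbb{S}^n$ is $(\beta(c) - \alpha'(c)/2)/\sqrt{\alpha(c)}$, which vanishes precisely when $c = (m_2 - m_1)k/[2(n-1)]$. Since the multiplicity of the smallest principal curvature is $m = m_2$, and $m_1 + m_2 = n-1$ when $k = 2$ while $m_1 + m_2 = (n-1)/2$ when $k = 4$, a direct substitution yields $c = -1 + km/(n-1)$ in those cases; when $m_1 = m_2$ (forced when $k = 1, 3, 6$) the same formula reduces to $c = 0$. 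This identifies the value $R$ of the theorem as the unique $r$-level whose hypersurface is minimal in $\mathbb{S}^n$.

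Third, the shape of $V$ is read off from a qualitative analysis of the reduced ODE on $J \subset [-1, 1]$. Evaluating at $V' = 0$ gives $V'' = 1/\alpha(r) > 0$, so any interior zero of $V'$ is automatically a strict local minimum; moreover, using the explicit formula for $\beta(r) - \alpha'(r)/2$ one checks that $r = R$ is the only point at which a bounded solution can have a horizontal tangent (this gives the symmetric well of Types I–III centred at $r = R$). Near the focal endpoints $r = \pm 1$, where $\alpha(r) \to 0$, an asymptotic expansion of the ODE forces the solution either to extend with a finite limit of $V'$ (producing the cusp-like endpoints of Types VI, VII), to terminate with a vertical tangent $V' \to \pm\infty$ (Types IV, V), or to be inadmissible. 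Enumerating the compatible endpoint behaviours on each side of $R$, together with the monotonicity constraints imposed by $V'' > 0$ at interior critical points, exhausts exactly the seven configurations in Figures \ref{Vex1}–\ref{Vex7}.

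The main obstacle will be the third step, specifically the asymptotic analysis of the ODE at the singular endpoints $r = \pm 1$ and the proof that no additional shapes beyond the seven depicted can arise. I would handle this by rephrasing the ODE as a first-order equation for the bounded quantity $\sqrt{\alpha(r)}\,\psi(r) \in (-1, 1)$, extracting the possible asymptotic regimes at $r \to \pm 1$ and at $r = R$ by inspection of leading-order balances, and then using the sign information on $V''$ together with ODE uniqueness to show that each solution traverses these regimes in exactly one of seven combinatorial patterns. A secondary difficulty is to verify that in the borderline cases $k = 1, 3, 6$ the computation of $R$ remains consistent when one of $m_1, m_2$ is formally set to $0$; this reduces to a direct check using $m_1 + m_2 = n-1$ for $k = 2$ and the analogous multiplicity identities furnished by Münzner.
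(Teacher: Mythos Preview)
Your proposal does not address the stated theorem at all. The statement in question is M\"unzner's structure theorem for isoparametric functions on $\mathbb{S}^n$: that any such function is the restriction of a Cartan--M\"unzner polynomial satisfying the two Euclidean identities \eqref{eq:CMpol}, and that the number $k$ of distinct principal curvatures is restricted to $\{1,2,3,4,6\}$. The paper does not prove this theorem; it merely quotes it from \cite{M,M2} as background input.

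What you have written is instead a sketch of the proof of Theorem~\ref{thm:shape-graph}, the paper's main result about the shapes of $V$. Your reduction of the soliton PDE to an ODE, the identification of $R$ as the minimal level, and the qualitative enumeration of profiles all pertain to that theorem, not to M\"unzner's. Even setting aside this mismatch, a genuine proof of the stated result would require the algebraic and topological machinery of M\"unzner's original papers (extension of $r$ to a polynomial via the focal structure, and the cohomological argument constraining $k$), none of which appears in your outline. You should either recognize that this statement is cited without proof in the present paper, or, if asked to supply a proof, consult \cite{M,M2} directly.
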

\begin{remark}\label{remark:multi}
According to M\"{u}nzner\cite{M,M2}, we find the following two facts.

(i) If $k=1$, $3$, $6$ , then the mulitiplicities are equal.
If $k=2$, $4$, then there are at most two distinct multiplicities $m_1$, $m_2$.

(ii) By {\rm (\ref{eq:CMpol})}, we obtain
\begin{equation}\label{isopara-sphere}
\begin{cases}
\|\nabla r\|^2=k^2(1-r^2)\\
\displaystyle\Delta r=\frac{m_2-m_1}{2}k^2-k(n+k-1)r.
\end{cases}
\end{equation}
From the first equation of {\rm (\ref{isopara-sphere})}, we find that $r(\mathbb{S}^n)=[-1,1]$.
\end{remark}

For Lemma \ref{lemma:graph-soliton-condition}, considering the case where  $u$ is the composition of the isoparametric function and some function, the following lemma is derived.
\begin{lemma}
Let $r:N\to\mathbb{R}$ be an isoparametric function on $N$.
If  the mean curvature flow starting from $\Gamma$ is a translating soliton and there if exists a $C^{\infty}$-function $V$ on $r(M)$ such that $u=(V\circ r)\vert_M$, the function $V$ satisfies
\begin{equation}
2\alpha V''-\alpha(\alpha'-2\beta) V'^3-2\alpha V'^2+2\beta V'-2=0,\label{eq:isopara-soliton}
\end{equation}
where $'$ denotes derivative on $r(M)$ and $\alpha,\beta$ are $C^{\infty}-$functions which satisfy $\|\nabla r\|^2=\alpha\circ r,~\Delta r=\beta\circ r$.\label{eq:isopara-graph-soliton}
Conversely, if $V$ satisfies {\rm (\ref{eq:isopara-soliton})}, the family of the images $\{M_t\}_{t\in\mathbb{R}}$ defined by $f_t(x)=(x,(V\circ r)(x)+t),~x\in M$ and $M_t=f_t(M)$ is the translating soliton.
\end{lemma}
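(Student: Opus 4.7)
The plan is to reduce the problem to the graphical soliton PDE from Lemma \ref{lemma:graph-soliton-condition} and then exploit the fact that $u = V\circ r$ is a function of $r$ alone, so that every quantity appearing in the PDE collapses to a function of $r$ via the isoparametric identities $\|\nabla r\|^{2}=\alpha\circ r$ and $\Delta r=\beta\circ r$.

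First I would compute $\nabla u$ and $\|\nabla u\|^{2}$ at a point of $M$. Since $u = V\circ r$, the chain rule gives $\nabla u = (V'\circ r)\,\nabla r$, so
\begin{equation*}
\|\nabla u\|^{2} = (V'\circ r)^{2}\,(\alpha\circ r),\qquad 1+\|\nabla u\|^{2}=1+\alpha V'^{2},
\end{equation*}
where from now on I suppress the composition with $r$. Next, write
\begin{equation*}
\frac{\nabla u}{\sqrt{1+\|\nabla u\|^{2}}}=W\,\nabla r,\qquad W:=\frac{V'}{\sqrt{1+\alpha V'^{2}}},
\end{equation*}
which is $\nabla r$ multiplied by a function of $r$. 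For any such function $W=W(r)$ the identity
\begin{equation*}
\operatorname{div}(W\,\nabla r)=\langle \nabla W,\nabla r\rangle+W\,\Delta r = W'\,\|\nabla r\|^{2}+W\,\Delta r = \alpha W'+\beta W
\end{equation*}
converts the divergence in equation (\ref{eq:graph-soliton}) into a pure ODE expression in $r$.

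The next step is to differentiate $W=V'(1+\alpha V'^{2})^{-1/2}$ with respect to $r$; a direct computation yields
\begin{equation*}
W' = (1+\alpha V'^{2})^{-3/2}\!\left(V''-\tfrac{1}{2}\alpha' V'^{3}\right),
\end{equation*}
so that Lemma \ref{lemma:graph-soliton-condition} becomes
\begin{equation*}
\sqrt{1+\alpha V'^{2}}\,(\alpha W'+\beta W)=\frac{\alpha V''-\tfrac{1}{2}\alpha\alpha' V'^{3}}{1+\alpha V'^{2}}+\beta V'=1.
\end{equation*}
Clearing the denominator by multiplying by $2(1+\alpha V'^{2})$ and rearranging produces precisely
\begin{equation*}
2\alpha V''-\alpha(\alpha'-2\beta)V'^{3}-2\alpha V'^{2}+2\beta V'-2=0,
\end{equation*}
i.e. (\ref{eq:isopara-soliton}). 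The routine-but-careful part is tracking signs and factors when combining the $\alpha W'$ and $\beta W$ terms and multiplying out; this bookkeeping is where mistakes are most likely, so I would double-check by verifying that the coefficient of $V'^{3}$ is $-\alpha\alpha'+2\alpha\beta$ before factoring.

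For the converse, the computation is reversible: if $V$ satisfies (\ref{eq:isopara-soliton}) on $r(M)$, then reading the derivation backwards shows that $u=V\circ r$ satisfies (\ref{eq:graph-soliton}), and an application of the second half of Lemma \ref{lemma:graph-soliton-condition} to the family $f_{t}(x)=(x,(V\circ r)(x)+t)$ yields the translating soliton. No genuine obstacle is expected; the content of the lemma is the algebraic reduction of the divergence-form PDE to an ODE via the isoparametric identities.
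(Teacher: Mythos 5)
Your proposal is correct and takes essentially the same route as the paper: both reduce (\ref{eq:graph-soliton}) to an ODE in $r$ via the chain rule and the isoparametric identities $\|\nabla r\|^2=\alpha\circ r$, $\Delta r=\beta\circ r$, and both arrive at the identical intermediate equation $\frac{\alpha V''}{1+\alpha V'^2}+\beta V'-\frac{\alpha\alpha'V'^3}{2(1+\alpha V'^2)}=1$ before clearing denominators. The only cosmetic difference is that you compute ${\rm div}(W\,\nabla r)=\alpha W'+\beta W$ directly, whereas the paper first expands the left side of (\ref{eq:graph-soliton}) as $\Delta u-\frac{1}{2(1+\|\nabla u\|^2)}\nabla u(\|\nabla u\|^2)$ and then substitutes $u=V\circ r$; your coefficient bookkeeping (including $W'=(1+\alpha V'^2)^{-3/2}(V''-\tfrac12\alpha'V'^3)$ and the coefficient $-\alpha\alpha'+2\alpha\beta$ of $V'^3$) checks out.
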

\begin{proof}
For the left side of (\ref{eq:graph-soliton}), we have
\begin{equation*}
\sqrt{1+\|\nabla u\|^2}~{\rm div}\left(\frac{\nabla u}{\sqrt{1+\|\nabla u\|^2}}\right)=\Delta u-\frac{1}{2(1+\|\nabla u\|^2)}\nabla u(\|\nabla u\|^2).
\end{equation*}
By $u=V\circ r$, we find
\begin{equation*}
\begin{split}
\|\nabla u\|^2&=\left(\alpha V'^2\right)\circ r,\\
\nabla u(\|\nabla u\|^2)&=\left(\alpha V'^2\left(2\alpha V''+\alpha'V'\right)\right)\circ r,\\
\Delta u&=\left(\alpha V''+\beta V'\right)\circ r.
\end{split}
\end{equation*}
Therefore, (\ref{eq:graph-soliton}) is reduced to the following equation
\begin{equation*}
\frac{\alpha V''}{1+\alpha V'^2}+\beta V'-\frac{\alpha\alpha'V'^3}{2\left(1+\alpha V'^2\right)}=1.
\end{equation*}
By this equation, we obtain (\ref{eq:isopara-soliton}).
\end{proof}

\section{Proof of Theorem \ref{thm:shape-graph}}

In this section, we assume that $N$ is the $n$-dimensional unit sphere $\mathbb{S}^n$ ($n\ge2$) and  $u=(V\circ r)\vert_{r^{-1}(J)}$ with an isoparametric function $r$ on $\mathbb{S}^n$ and a $C^{\infty}$-function $V$ on interval $J\subset r(\mathbb{S}^n)=[-1,1]$.
By (\ref{isopara-sphere}), substituting $\alpha(r)=k^2(1-r^2)$ and $\beta(r)=\frac{m_2-m_1}{2}k^2-k(n+k-1)r$ for (\ref{eq:isopara-soliton}), we obtain
\begin{align}
V''(r)=&k((n-1)r-\frac{m_2-m_1}{2}k)V'(r)^3+V'(r)^2\nonumber\\
&+\frac{((n+k-1)r-\frac{m_2-m_1}{2}k)}{k(1-r^2)}V'(r)+\frac{1}{k^2(1-r^2)},~~~r\in (-1,1).\label{eq:isopara-soliton-sphere}
\end{align}
The local existence of the solution $V$ of (\ref{eq:isopara-soliton-sphere}) is clear.
By Remark \ref{remark:multi} (i), we find
\begin{equation*}
m_2-m_1=
\begin{cases}
0\quad \hspace{1.9cm}(k=1,3,6)\\
2(m_2-\frac{n-1}{k})\quad(k=2,4).
\end{cases}
\end{equation*}
Therefore, (\ref{eq:isopara-soliton-sphere}) is reduced to
\begin{align}
V''(r)=&k((n-1)(r-R))V'(r)^3+V'(r)^2\nonumber\\
&+\frac{((n+k-1)r-(n-1)R)}{k(1-r^2)}V'(r)+\frac{1}{k^2(1-r^2)},~~~r\in (-1,1).\label{eq:isopara-soliton-sphere2}
\end{align}
Here, $R\in(-1,1)$ is the constant defined by
\begin{equation*}
R:=
\begin{cases}
0\hspace{2.125cm}(k=1,3,6)\\
\displaystyle -1+\frac{km_2}{n-1}\quad(k=2,4),
\end{cases}
\end{equation*}
and when $k=2,4$, $m_2$ is equal to the multiplicity of the smallest principal curvature of the isoparametric hypersurface defined by the level set of $r$.
To prove Theorem \ref{thm:shape-graph}, we consider the graph of the solution $V$ of (\ref{eq:isopara-soliton-sphere2}).
Define $\psi(r)=k\sqrt{1-r^2}V'(r)$.
The equation (\ref{eq:isopara-soliton-sphere2}) is reduced to
\begin{equation}
\psi'(r)=\frac{1}{k(1-r^2)}\left(\psi(r)^2+1\right)\left((n-1)(r-R)\psi(r)+\sqrt{1-r^2}\right).\label{eq:isopara-soliton-sphere3}
\end{equation}
Therefore, we consider the behavior of the solution $\psi$ of (\ref{eq:isopara-soliton-sphere3}).
Define $\eta(r)=-\frac{\sqrt{1-r^2}}{(n-1)(r-R)}$.
Then, the following lemma holds clearly.
\begin{lemma}\label{thm:sign}
~
\begin{itemize}
\item[(i)] When $r\in(R,1)$ {\rm :}
\begin{itemize}
\item [(a)] if $\psi(r)>\eta(r)$, then $\psi'(r)>0$,
\item [(b)] if $\psi(r)=\eta(r)$, then $\psi'(r)=0$,
\item [(c)] if $\psi(r)<\eta(r)$, then $\psi'(r)<0$.
\end{itemize}
\item[(ii)] When $r\in(-1,R)$ {\rm :}
\begin{itemize}
\item [(a)] if $\psi(r)<\eta(r)$, then $\psi'(r)>0$,
\item [(b)] if $\psi(r)=\eta(r)$, then $\psi'(r)=0$,
\item [(c)] if $\psi(r)>\eta(r)$, then $\psi'(r)<0$.
\end{itemize}
\item[(iii)] When $r=R$ or $\psi(r)=0$ {\rm :} $\psi'(r)>0$.
\end{itemize}
\end{lemma}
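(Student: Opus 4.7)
The plan is a direct sign analysis of the right-hand side of \eqref{eq:isopara-soliton-sphere3}. On the open interval $(-1,1)$ the prefactor $\frac{1}{k(1-r^2)}(\psi(r)^2+1)$ is strictly positive, so the sign of $\psi'(r)$ coincides with the sign of
\[
F(r,\psi(r)):=(n-1)(r-R)\psi(r)+\sqrt{1-r^2}.
\]
The key algebraic observation is that, for $r\ne R$, the definition $\eta(r)=-\sqrt{1-r^2}/[(n-1)(r-R)]$ allows us to rewrite
\[
F(r,\psi(r))=(n-1)(r-R)\bigl(\psi(r)-\eta(r)\bigr),
\]
so the sign of $\psi'(r)$ is the product of $\operatorname{sgn}(r-R)$ and $\operatorname{sgn}(\psi(r)-\eta(r))$.

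From this factorization, (i) follows immediately: on $(R,1)$ the factor $r-R$ is positive, so $\psi'(r)$ shares the sign of $\psi(r)-\eta(r)$, yielding the three subcases (a)--(c). Similarly, on $(-1,R)$ the factor $r-R$ is negative, which reverses the correspondence and produces (ii). The equality subcases (b) in both (i) and (ii) are simply the statement that $F$ vanishes precisely when $\psi(r)=\eta(r)$.

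Case (iii) must be handled separately because the factorization above is singular at $r=R$. I would substitute directly into $F$: at $r=R$ the $\psi$-term disappears and $F(R,\psi(R))=\sqrt{1-R^2}$, which is strictly positive since $R\in(-1,1)$. Likewise, if $\psi(r)=0$ for some $r\in(-1,1)$, then $F(r,0)=\sqrt{1-r^2}>0$. Multiplying by the positive prefactor gives $\psi'(r)>0$ in both subcases.

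There is no real obstacle here: the lemma is an elementary observation about the sign structure of the right-hand side of the ODE \eqref{eq:isopara-soliton-sphere3}, and the only mild subtlety is that $\eta(r)$ blows up at $r=R$ while $F$ remains finite and positive there, which is precisely why (iii) is stated as a separate case.
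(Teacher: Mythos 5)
Your proof is correct and is exactly the elementary sign analysis the paper has in mind: the paper simply asserts that the lemma ``holds clearly'' from the form of equation (3.4) and the definition of $\eta$, and your factorization $(n-1)(r-R)\psi(r)+\sqrt{1-r^2}=(n-1)(r-R)\bigl(\psi(r)-\eta(r)\bigr)$ together with the positivity of the prefactor $\frac{1}{k(1-r^2)}(\psi(r)^2+1)$ is precisely the verification being omitted. No gaps; the separate treatment of case (iii), where $\eta$ is singular but the right-hand side remains $\sqrt{1-R^2}>0$ (resp.\ $\sqrt{1-r^2}>0$), is handled correctly.
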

\begin{figure}[H]
\centering
\scalebox{1.0}{{\unitlength 0.1in%
\begin{picture}(20.9000,21.4000)(1.4000,-22.0000)%
\special{pn 13}%
\special{pn 13}%
\special{pa 400 1200}%
\special{pa 403 1187}%
\special{fp}%
\special{pa 417 1158}%
\special{pa 426 1148}%
\special{fp}%
\special{pa 449 1127}%
\special{pa 460 1118}%
\special{fp}%
\special{pa 486 1099}%
\special{pa 497 1092}%
\special{fp}%
\special{pa 524 1074}%
\special{pa 534 1066}%
\special{fp}%
\special{pa 562 1049}%
\special{pa 573 1041}%
\special{fp}%
\special{pa 600 1024}%
\special{pa 611 1017}%
\special{fp}%
\special{pa 637 998}%
\special{pa 648 990}%
\special{fp}%
\special{pa 674 971}%
\special{pa 684 963}%
\special{fp}%
\special{pa 709 943}%
\special{pa 719 934}%
\special{fp}%
\special{pa 743 912}%
\special{pa 753 903}%
\special{fp}%
\special{pa 776 880}%
\special{pa 785 870}%
\special{fp}%
\special{pa 806 846}%
\special{pa 815 836}%
\special{fp}%
\special{pa 835 811}%
\special{pa 842 800}%
\special{fp}%
\special{pa 861 773}%
\special{pa 868 762}%
\special{fp}%
\special{pa 884 734}%
\special{pa 891 723}%
\special{fp}%
\special{pa 906 694}%
\special{pa 912 682}%
\special{fp}%
\special{pa 925 653}%
\special{pa 931 641}%
\special{fp}%
\special{pa 943 611}%
\special{pa 948 598}%
\special{fp}%
\special{pa 959 568}%
\special{pa 963 555}%
\special{fp}%
\special{pa 973 525}%
\special{pa 977 512}%
\special{fp}%
\special{pa 986 481}%
\special{pa 989 468}%
\special{fp}%
\special{pa 997 437}%
\special{pa 1000 424}%
\special{fp}%
\special{pa 1008 392}%
\special{pa 1011 379}%
\special{fp}%
\special{pa 1017 348}%
\special{pa 1020 335}%
\special{fp}%
\special{pa 1026 303}%
\special{pa 1028 290}%
\special{fp}%
\special{pa 1034 258}%
\special{pa 1036 245}%
\special{fp}%
\special{pa 1041 213}%
\special{pa 1043 200}%
\special{fp}%
\special{pn 13}%
\special{pa 1357 2200}%
\special{pa 1359 2187}%
\special{fp}%
\special{pa 1364 2155}%
\special{pa 1366 2142}%
\special{fp}%
\special{pa 1372 2110}%
\special{pa 1374 2097}%
\special{fp}%
\special{pa 1380 2065}%
\special{pa 1383 2052}%
\special{fp}%
\special{pa 1389 2021}%
\special{pa 1392 2008}%
\special{fp}%
\special{pa 1400 1976}%
\special{pa 1403 1963}%
\special{fp}%
\special{pa 1411 1932}%
\special{pa 1414 1919}%
\special{fp}%
\special{pa 1423 1888}%
\special{pa 1427 1875}%
\special{fp}%
\special{pa 1437 1845}%
\special{pa 1441 1832}%
\special{fp}%
\special{pa 1452 1802}%
\special{pa 1457 1789}%
\special{fp}%
\special{pa 1469 1759}%
\special{pa 1475 1747}%
\special{fp}%
\special{pa 1488 1718}%
\special{pa 1494 1706}%
\special{fp}%
\special{pa 1509 1677}%
\special{pa 1516 1666}%
\special{fp}%
\special{pa 1532 1638}%
\special{pa 1539 1627}%
\special{fp}%
\special{pa 1558 1600}%
\special{pa 1565 1589}%
\special{fp}%
\special{pa 1585 1564}%
\special{pa 1594 1554}%
\special{fp}%
\special{pa 1615 1530}%
\special{pa 1624 1520}%
\special{fp}%
\special{pa 1647 1497}%
\special{pa 1657 1488}%
\special{fp}%
\special{pa 1681 1466}%
\special{pa 1691 1457}%
\special{fp}%
\special{pa 1716 1437}%
\special{pa 1726 1429}%
\special{fp}%
\special{pa 1752 1410}%
\special{pa 1763 1402}%
\special{fp}%
\special{pa 1789 1383}%
\special{pa 1800 1376}%
\special{fp}%
\special{pa 1827 1359}%
\special{pa 1838 1351}%
\special{fp}%
\special{pa 1866 1334}%
\special{pa 1876 1326}%
\special{fp}%
\special{pa 1903 1308}%
\special{pa 1914 1301}%
\special{fp}%
\special{pa 1940 1282}%
\special{pa 1951 1273}%
\special{fp}%
\special{pa 1974 1252}%
\special{pa 1983 1242}%
\special{fp}%
\special{pa 1997 1213}%
\special{pa 2000 1200}%
\special{fp}%
%
\special{pn 8}%
\special{pn 8}%
\special{pa 400 200}%
\special{pa 400 263}%
\special{fp}%
\special{pa 400 293}%
\special{pa 400 302}%
\special{fp}%
\special{pa 400 332}%
\special{pa 400 395}%
\special{fp}%
\special{pa 400 426}%
\special{pa 400 434}%
\special{fp}%
\special{pa 400 464}%
\special{pa 400 527}%
\special{fp}%
\special{pa 400 558}%
\special{pa 400 566}%
\special{fp}%
\special{pa 400 596}%
\special{pa 400 659}%
\special{fp}%
\special{pa 400 690}%
\special{pa 400 698}%
\special{fp}%
\special{pa 400 728}%
\special{pa 400 791}%
\special{fp}%
\special{pa 400 821}%
\special{pa 400 829}%
\special{fp}%
\special{pa 400 860}%
\special{pa 400 923}%
\special{fp}%
\special{pa 400 953}%
\special{pa 400 961}%
\special{fp}%
\special{pa 400 992}%
\special{pa 400 1055}%
\special{fp}%
\special{pa 400 1085}%
\special{pa 400 1093}%
\special{fp}%
\special{pa 400 1124}%
\special{pa 400 1187}%
\special{fp}%
\special{pa 400 1217}%
\special{pa 400 1225}%
\special{fp}%
\special{pa 400 1256}%
\special{pa 400 1319}%
\special{fp}%
\special{pa 400 1349}%
\special{pa 400 1357}%
\special{fp}%
\special{pa 400 1388}%
\special{pa 400 1451}%
\special{fp}%
\special{pa 400 1481}%
\special{pa 400 1489}%
\special{fp}%
\special{pa 400 1520}%
\special{pa 400 1583}%
\special{fp}%
\special{pa 400 1613}%
\special{pa 400 1621}%
\special{fp}%
\special{pa 400 1652}%
\special{pa 400 1715}%
\special{fp}%
\special{pa 400 1745}%
\special{pa 400 1753}%
\special{fp}%
\special{pa 400 1784}%
\special{pa 400 1847}%
\special{fp}%
\special{pa 400 1877}%
\special{pa 400 1885}%
\special{fp}%
\special{pa 400 1916}%
\special{pa 400 1979}%
\special{fp}%
\special{pa 400 2009}%
\special{pa 400 2017}%
\special{fp}%
\special{pa 400 2048}%
\special{pa 400 2111}%
\special{fp}%
\special{pa 400 2141}%
\special{pa 400 2149}%
\special{fp}%
\special{pa 400 2180}%
\special{pa 400 2200}%
\special{fp}%
\special{pn 8}%
\special{pa 2000 2200}%
\special{pa 2000 2137}%
\special{fp}%
\special{pa 2000 2107}%
\special{pa 2000 2098}%
\special{fp}%
\special{pa 2000 2068}%
\special{pa 2000 2005}%
\special{fp}%
\special{pa 2000 1974}%
\special{pa 2000 1966}%
\special{fp}%
\special{pa 2000 1936}%
\special{pa 2000 1873}%
\special{fp}%
\special{pa 2000 1842}%
\special{pa 2000 1834}%
\special{fp}%
\special{pa 2000 1804}%
\special{pa 2000 1741}%
\special{fp}%
\special{pa 2000 1710}%
\special{pa 2000 1702}%
\special{fp}%
\special{pa 2000 1672}%
\special{pa 2000 1609}%
\special{fp}%
\special{pa 2000 1579}%
\special{pa 2000 1571}%
\special{fp}%
\special{pa 2000 1540}%
\special{pa 2000 1477}%
\special{fp}%
\special{pa 2000 1447}%
\special{pa 2000 1439}%
\special{fp}%
\special{pa 2000 1408}%
\special{pa 2000 1345}%
\special{fp}%
\special{pa 2000 1315}%
\special{pa 2000 1307}%
\special{fp}%
\special{pa 2000 1276}%
\special{pa 2000 1213}%
\special{fp}%
\special{pa 2000 1183}%
\special{pa 2000 1175}%
\special{fp}%
\special{pa 2000 1144}%
\special{pa 2000 1081}%
\special{fp}%
\special{pa 2000 1051}%
\special{pa 2000 1043}%
\special{fp}%
\special{pa 2000 1012}%
\special{pa 2000 949}%
\special{fp}%
\special{pa 2000 919}%
\special{pa 2000 911}%
\special{fp}%
\special{pa 2000 880}%
\special{pa 2000 817}%
\special{fp}%
\special{pa 2000 787}%
\special{pa 2000 779}%
\special{fp}%
\special{pa 2000 748}%
\special{pa 2000 685}%
\special{fp}%
\special{pa 2000 655}%
\special{pa 2000 647}%
\special{fp}%
\special{pa 2000 616}%
\special{pa 2000 553}%
\special{fp}%
\special{pa 2000 523}%
\special{pa 2000 515}%
\special{fp}%
\special{pa 2000 484}%
\special{pa 2000 421}%
\special{fp}%
\special{pa 2000 391}%
\special{pa 2000 383}%
\special{fp}%
\special{pa 2000 352}%
\special{pa 2000 289}%
\special{fp}%
\special{pa 2000 259}%
\special{pa 2000 251}%
\special{fp}%
\special{pa 2000 220}%
\special{pa 2000 200}%
\special{fp}%
\put(22.3000,-12.5000){\makebox(0,0)[lb]{$r$}}%
\put(20.3000,-11.8000){\makebox(0,0)[lb]{$1$}}%
\put(3.6000,-11.7000){\makebox(0,0)[rb]{$-1$}}%
\put(8.5000,-4.5000){\makebox(0,0)[lb]{$\eta$}}%
%
\special{pn 8}%
\special{pa 200 1200}%
\special{pa 2200 1200}%
\special{fp}%
\special{sh 1}%
\special{pa 2200 1200}%
\special{pa 2133 1180}%
\special{pa 2147 1200}%
\special{pa 2133 1220}%
\special{pa 2200 1200}%
\special{fp}%
%
\special{pn 8}%
\special{pa 1200 2200}%
\special{pa 1200 200}%
\special{fp}%
\put(10.8000,-1.9000){\makebox(0,0)[lb]{$r=R$}}%
\end{picture}}}
\caption{The graph of $\eta$}
\label{eta}
\end{figure}
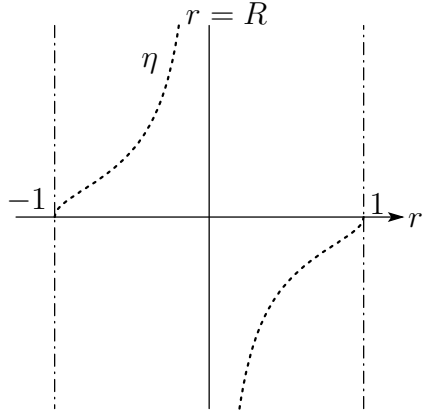

For the shape of $\psi$ in the case where $\psi>0$, we obtain the following lemmata.

\begin{lemma}\label{thm:psi-shape1}
If there exists $r_0\in(R,1)$ with $\psi(r_0)>0$, there exists $r_1\in(r_0,1)$ such that 
\begin{equation*}
\lim_{r\uparrow r_1}\psi(r)=+\infty.
\end{equation*}
\end{lemma}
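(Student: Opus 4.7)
The plan is to exploit the autonomous nature of the sign analysis in Lemma~\ref{thm:sign} together with a cubic differential inequality for $\psi$, and to integrate the inequality to force blow-up before $r$ can reach $1$.

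First I would fix $r_0\in(R,1)$ with $\psi(r_0)>0$ and work on the maximal interval of existence $[r_0,r^\ast)\subset[r_0,1)$ of the solution to (\ref{eq:isopara-soliton-sphere3}). Since $\eta(r)<0$ for $r\in(R,1)$, the condition $\psi(r)>0$ implies $\psi(r)>\eta(r)$, so Lemma~\ref{thm:sign}(i)(a) gives $\psi'(r)>0$ on the subset of $[r_0,r^\ast)$ where $\psi$ is positive. Consequently $\psi$ is strictly increasing there and, in particular, $\psi(r)\ge \psi(r_0)>0$ on the whole of $[r_0,r^\ast)$; positivity is preserved.

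Next, on this interval I would extract a cubic lower bound on $\psi'$. Dropping the nonnegative term $\sqrt{1-r^2}$ from the second factor of (\ref{eq:isopara-soliton-sphere3}) and using $\psi^2+1\ge \psi^2$ gives
\begin{equation*}
\psi'(r)\ \ge\ \frac{(n-1)(r-R)}{k(1-r^2)}\,\psi(r)^3,\qquad r\in[r_0,r^\ast).
\end{equation*}
Dividing by $\psi^3>0$ and integrating from $r_0$ to any $r\in[r_0,r^\ast)$ yields
\begin{equation*}
\frac{1}{2\psi(r_0)^2}-\frac{1}{2\psi(r)^2}\ \ge\ \int_{r_0}^{r}\frac{(n-1)(s-R)}{k(1-s^2)}\,ds.
\end{equation*}
The left-hand side is bounded above by $1/(2\psi(r_0)^2)$, while the integrand on the right is asymptotic to $\frac{(n-1)(1-R)}{2k(1-s)}$ as $s\uparrow 1$, so the integral diverges to $+\infty$ as $r\uparrow 1$. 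This forces $r^\ast<1$.

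Finally, I would argue that at the endpoint $r^\ast$ one has $\psi(r)\to +\infty$. Since the right-hand side of (\ref{eq:isopara-soliton-sphere3}) is smooth in $(r,\psi)$ on $(-1,1)\times\mathbb{R}$, the only obstruction to continuing the solution past $r^\ast<1$ is that $\psi$ becomes unbounded; because $\psi$ is monotone increasing on $[r_0,r^\ast)$, a finite upper bound would allow continuation by the standard extension theorem, contradicting maximality. Hence $\lim_{r\uparrow r^\ast}\psi(r)=+\infty$, and setting $r_1:=r^\ast$ completes the argument. The only delicate point is verifying that the divergence of the right-hand integral really occurs inside $(R,1)$ (and not only in the limit), which is immediate from the $(1-s)^{-1}$-type singularity at $s=1$, so no substantial obstacle remains.
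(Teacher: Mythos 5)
Your proof is correct and follows essentially the same route as the paper: the identical cubic differential inequality $\psi'(r)/\psi(r)^3>\frac{(n-1)(r-R)}{k(1-r^2)}$, integrated from $r_0$, with the divergence of the right-hand side as $r\uparrow 1$ forcing finite-time blow-up. The only difference is in the finishing step: the paper exhibits the explicit antiderivative $h_1$ and a zero $\overline{r}_1$ of it, whereas you invoke the maximal-interval/continuation argument; your version is if anything slightly cleaner, since the paper's $\overline{r}_1$ is a priori only an upper bound for the actual blow-up point $r_1$.
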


\begin{proof}
For all $r\in(r_0,1)$, we find $\psi'(r)>0$ and $\psi(r)>0$.
Also, we have
\begin{align*}
\psi'(r)&=\frac{1}{k(1-r^2)}\Bigl(\psi(r)^2+1\Bigl)\Bigl((n-1)(r-R)\psi(r)+\sqrt{1-r^2}\Bigl)\\
&>\frac{(n-1)(r-R)}{k(1-r^2)}\psi(r)^3.
\end{align*} 
Therefore, we find
\begin{equation*}
\frac{\psi'(r)}{\psi(r)^3}>\frac{(n-1)(r-R)}{k(1-r^2)}.
\end{equation*}
By integrating from $r_0$ to $r$, we have
\begin{align*}
\frac{1}{\psi(r)^2}<\frac{(n-1)}{k}&\log{(1-r^2)}+\frac{(n-1)R}{k}\log{\frac{1+r}{1-r}}\\
&-\frac{(n-1)}{k}\log{(1-r_0^2)}-\frac{(n-1)R}{k}\log{\frac{1+r_0}{1-r_0}}+\frac{1}{\psi(r_0)^2}=:h_1(r).
\end{align*}
Here, $h_1$ is decreasing on $(r_0,1)$ and 
\begin{equation*}
h_1(r_0)=\frac{1}{\psi(r_0)^2}>0,\quad \lim_{r\uparrow 1}h_1(r)=-\infty.
\end{equation*}
Therefore, there exists $\overline{r}_1\in(r_0,1)$ with $h_1(\overline{r}_1)=0$ and
\begin{equation*}
\psi(r)>\frac{1}{\sqrt{h_1(r)}}\rightarrow+\infty\quad(~r~\uparrow~\overline{r}_1~).
\end{equation*}
Then, we obtain the statement of this lemma.
\end{proof}

\begin{figure}[H]
\centering
\scalebox{1.0}{{\unitlength 0.1in%
\begin{picture}(20.9000,21.5000)(1.4000,-22.0000)%
\special{pn 20}%
\special{pn 20}%
\special{pa 590 2200}%
\special{pa 593 2180}%
\special{ip}%
\special{pa 598 2155}%
\special{pa 601 2136}%
\special{ip}%
\special{pa 605 2111}%
\special{pa 609 2091}%
\special{ip}%
\special{pa 614 2067}%
\special{pa 615 2060}%
\special{pa 618 2047}%
\special{ip}%
\special{pa 623 2022}%
\special{pa 627 2003}%
\special{ip}%
\special{pa 632 1978}%
\special{pa 636 1959}%
\special{ip}%
\special{pa 642 1934}%
\special{pa 647 1915}%
\special{ip}%
\special{pa 653 1890}%
\special{pa 655 1883}%
\special{pa 658 1871}%
\special{ip}%
\special{pa 665 1847}%
\special{pa 665 1846}%
\special{pa 670 1827}%
\special{ip}%
\special{pa 678 1803}%
\special{pa 684 1784}%
\special{ip}%
\special{pa 692 1760}%
\special{pa 698 1741}%
\special{ip}%
\special{pa 706 1718}%
\special{pa 713 1699}%
\special{ip}%
\special{pa 723 1676}%
\special{pa 730 1657}%
\special{ip}%
\special{pa 740 1634}%
\special{pa 745 1624}%
\special{pa 749 1616}%
\special{ip}%
\special{pa 760 1593}%
\special{pa 765 1583}%
\special{pa 769 1576}%
\special{ip}%
\special{pa 781 1554}%
\special{pa 790 1538}%
\special{pa 791 1536}%
\special{ip}%
\special{pa 804 1515}%
\special{pa 810 1506}%
\special{pa 815 1499}%
\special{pa 815 1498}%
\special{ip}%
\special{pa 830 1478}%
\special{pa 830 1477}%
\special{pa 835 1471}%
\special{pa 842 1462}%
\special{ip}%
\special{pa 857 1442}%
\special{pa 870 1427}%
\special{pa 870 1427}%
\special{ip}%
\special{pa 887 1408}%
\special{pa 890 1405}%
\special{pa 901 1394}%
\special{ip}%
\special{pa 919 1377}%
\special{pa 925 1371}%
\special{pa 935 1363}%
\special{ip}%
\special{pa 954 1347}%
\special{pa 969 1335}%
\special{ip}%
\special{pa 989 1320}%
\special{pa 990 1319}%
\special{pa 995 1316}%
\special{pa 1000 1312}%
\special{pa 1005 1309}%
\special{pa 1006 1308}%
\special{ip}%
\special{pa 1027 1295}%
\special{pa 1030 1292}%
\special{pa 1044 1284}%
\special{ip}%
\special{pa 1065 1271}%
\special{pa 1075 1265}%
\special{pa 1080 1263}%
\special{pa 1083 1261}%
\special{ip}%
\special{pa 1105 1249}%
\special{pa 1105 1249}%
\special{pa 1115 1243}%
\special{pa 1120 1241}%
\special{pa 1122 1240}%
\special{ip}%
\special{pa 1145 1228}%
\special{pa 1145 1228}%
\special{pa 1150 1225}%
\special{pa 1155 1223}%
\special{pa 1160 1220}%
\special{pa 1162 1219}%
\special{ip}%
\special{pa 1185 1208}%
\special{pa 1185 1208}%
\special{pa 1190 1205}%
\special{pa 1195 1203}%
\special{pa 1202 1199}%
\special{ip}%
\special{pa 1225 1187}%
\special{pa 1225 1187}%
\special{pa 1230 1185}%
\special{pa 1235 1182}%
\special{pa 1240 1180}%
\special{pa 1243 1178}%
\special{ip}%
\special{pa 1265 1167}%
\special{pa 1265 1167}%
\special{pa 1270 1165}%
\special{pa 1280 1159}%
\special{pa 1283 1158}%
\special{ip}%
\special{pa 1305 1146}%
\special{pa 1320 1137}%
\special{pa 1322 1136}%
\special{ip}%
\special{pa 1344 1124}%
\special{pa 1361 1113}%
\special{ip}%
\special{pa 1382 1100}%
\special{pa 1390 1095}%
\special{pa 1395 1091}%
\special{pa 1399 1089}%
\special{ip}%
\special{pa 1419 1074}%
\special{pa 1420 1073}%
\special{pa 1425 1070}%
\special{pa 1435 1062}%
\special{ip}%
\special{pa 1455 1046}%
\special{pa 1460 1042}%
\special{pa 1465 1037}%
\special{pa 1470 1033}%
\special{ip}%
\special{pa 1488 1016}%
\special{pa 1490 1015}%
\special{pa 1503 1002}%
\special{ip}%
\special{pa 1520 984}%
\special{pa 1520 984}%
\special{ip}%
\special{pa 1520 984}%
\special{pa 1525 978}%
\special{pa 1530 973}%
\special{pa 1555 943}%
\special{pa 1565 929}%
\special{pa 1570 923}%
\special{pa 1580 909}%
\special{pa 1585 901}%
\special{pa 1590 894}%
\special{pa 1610 862}%
\special{pa 1635 817}%
\special{pa 1645 797}%
\special{pa 1650 786}%
\special{pa 1655 776}%
\special{pa 1665 754}%
\special{pa 1680 718}%
\special{pa 1695 679}%
\special{pa 1700 665}%
\special{pa 1715 620}%
\special{pa 1725 588}%
\special{pa 1735 554}%
\special{pa 1740 536}%
\special{pa 1745 517}%
\special{pa 1760 457}%
\special{pa 1770 413}%
\special{pa 1780 365}%
\special{pa 1785 340}%
\special{pa 1790 314}%
\special{pa 1795 287}%
\special{pa 1810 200}%
\special{fp}%
\special{pn 13}%
\special{pn 13}%
\special{pa 400 1200}%
\special{pa 403 1187}%
\special{fp}%
\special{pa 417 1158}%
\special{pa 426 1148}%
\special{fp}%
\special{pa 449 1127}%
\special{pa 460 1118}%
\special{fp}%
\special{pa 486 1099}%
\special{pa 497 1092}%
\special{fp}%
\special{pa 524 1074}%
\special{pa 534 1066}%
\special{fp}%
\special{pa 562 1049}%
\special{pa 573 1041}%
\special{fp}%
\special{pa 600 1024}%
\special{pa 611 1017}%
\special{fp}%
\special{pa 637 998}%
\special{pa 648 990}%
\special{fp}%
\special{pa 674 971}%
\special{pa 684 963}%
\special{fp}%
\special{pa 709 943}%
\special{pa 719 934}%
\special{fp}%
\special{pa 743 912}%
\special{pa 753 903}%
\special{fp}%
\special{pa 776 880}%
\special{pa 785 870}%
\special{fp}%
\special{pa 806 846}%
\special{pa 815 836}%
\special{fp}%
\special{pa 835 811}%
\special{pa 842 800}%
\special{fp}%
\special{pa 861 773}%
\special{pa 868 762}%
\special{fp}%
\special{pa 884 734}%
\special{pa 891 723}%
\special{fp}%
\special{pa 906 694}%
\special{pa 912 682}%
\special{fp}%
\special{pa 925 653}%
\special{pa 931 641}%
\special{fp}%
\special{pa 943 611}%
\special{pa 948 598}%
\special{fp}%
\special{pa 959 568}%
\special{pa 963 555}%
\special{fp}%
\special{pa 973 525}%
\special{pa 977 512}%
\special{fp}%
\special{pa 986 481}%
\special{pa 989 468}%
\special{fp}%
\special{pa 997 437}%
\special{pa 1000 424}%
\special{fp}%
\special{pa 1008 392}%
\special{pa 1011 379}%
\special{fp}%
\special{pa 1017 348}%
\special{pa 1020 335}%
\special{fp}%
\special{pa 1026 303}%
\special{pa 1028 290}%
\special{fp}%
\special{pa 1034 258}%
\special{pa 1036 245}%
\special{fp}%
\special{pa 1041 213}%
\special{pa 1043 200}%
\special{fp}%
\special{pn 13}%
\special{pa 1357 2200}%
\special{pa 1359 2187}%
\special{fp}%
\special{pa 1364 2155}%
\special{pa 1366 2142}%
\special{fp}%
\special{pa 1372 2110}%
\special{pa 1374 2097}%
\special{fp}%
\special{pa 1380 2065}%
\special{pa 1383 2052}%
\special{fp}%
\special{pa 1389 2021}%
\special{pa 1392 2008}%
\special{fp}%
\special{pa 1400 1976}%
\special{pa 1403 1963}%
\special{fp}%
\special{pa 1411 1932}%
\special{pa 1414 1919}%
\special{fp}%
\special{pa 1423 1888}%
\special{pa 1427 1875}%
\special{fp}%
\special{pa 1437 1845}%
\special{pa 1441 1832}%
\special{fp}%
\special{pa 1452 1802}%
\special{pa 1457 1789}%
\special{fp}%
\special{pa 1469 1759}%
\special{pa 1475 1747}%
\special{fp}%
\special{pa 1488 1718}%
\special{pa 1494 1706}%
\special{fp}%
\special{pa 1509 1677}%
\special{pa 1516 1666}%
\special{fp}%
\special{pa 1532 1638}%
\special{pa 1539 1627}%
\special{fp}%
\special{pa 1558 1600}%
\special{pa 1565 1589}%
\special{fp}%
\special{pa 1585 1564}%
\special{pa 1594 1554}%
\special{fp}%
\special{pa 1615 1530}%
\special{pa 1624 1520}%
\special{fp}%
\special{pa 1647 1497}%
\special{pa 1657 1488}%
\special{fp}%
\special{pa 1681 1466}%
\special{pa 1691 1457}%
\special{fp}%
\special{pa 1716 1437}%
\special{pa 1726 1429}%
\special{fp}%
\special{pa 1752 1410}%
\special{pa 1763 1402}%
\special{fp}%
\special{pa 1789 1383}%
\special{pa 1800 1376}%
\special{fp}%
\special{pa 1827 1359}%
\special{pa 1838 1351}%
\special{fp}%
\special{pa 1866 1334}%
\special{pa 1876 1326}%
\special{fp}%
\special{pa 1903 1308}%
\special{pa 1914 1301}%
\special{fp}%
\special{pa 1940 1282}%
\special{pa 1951 1273}%
\special{fp}%
\special{pa 1974 1252}%
\special{pa 1983 1242}%
\special{fp}%
\special{pa 1997 1213}%
\special{pa 2000 1200}%
\special{fp}%
\put(15.5000,-3.0000){\makebox(0,0)[lb]{{\large $\psi$}}}%
%
\special{pn 8}%
\special{pn 8}%
\special{pa 400 200}%
\special{pa 400 263}%
\special{fp}%
\special{pa 400 293}%
\special{pa 400 302}%
\special{fp}%
\special{pa 400 332}%
\special{pa 400 395}%
\special{fp}%
\special{pa 400 426}%
\special{pa 400 434}%
\special{fp}%
\special{pa 400 464}%
\special{pa 400 527}%
\special{fp}%
\special{pa 400 558}%
\special{pa 400 566}%
\special{fp}%
\special{pa 400 596}%
\special{pa 400 659}%
\special{fp}%
\special{pa 400 690}%
\special{pa 400 698}%
\special{fp}%
\special{pa 400 728}%
\special{pa 400 791}%
\special{fp}%
\special{pa 400 821}%
\special{pa 400 829}%
\special{fp}%
\special{pa 400 860}%
\special{pa 400 923}%
\special{fp}%
\special{pa 400 953}%
\special{pa 400 961}%
\special{fp}%
\special{pa 400 992}%
\special{pa 400 1055}%
\special{fp}%
\special{pa 400 1085}%
\special{pa 400 1093}%
\special{fp}%
\special{pa 400 1124}%
\special{pa 400 1187}%
\special{fp}%
\special{pa 400 1217}%
\special{pa 400 1225}%
\special{fp}%
\special{pa 400 1256}%
\special{pa 400 1319}%
\special{fp}%
\special{pa 400 1349}%
\special{pa 400 1357}%
\special{fp}%
\special{pa 400 1388}%
\special{pa 400 1451}%
\special{fp}%
\special{pa 400 1481}%
\special{pa 400 1489}%
\special{fp}%
\special{pa 400 1520}%
\special{pa 400 1583}%
\special{fp}%
\special{pa 400 1613}%
\special{pa 400 1621}%
\special{fp}%
\special{pa 400 1652}%
\special{pa 400 1715}%
\special{fp}%
\special{pa 400 1745}%
\special{pa 400 1753}%
\special{fp}%
\special{pa 400 1784}%
\special{pa 400 1847}%
\special{fp}%
\special{pa 400 1877}%
\special{pa 400 1885}%
\special{fp}%
\special{pa 400 1916}%
\special{pa 400 1979}%
\special{fp}%
\special{pa 400 2009}%
\special{pa 400 2017}%
\special{fp}%
\special{pa 400 2048}%
\special{pa 400 2111}%
\special{fp}%
\special{pa 400 2141}%
\special{pa 400 2149}%
\special{fp}%
\special{pa 400 2180}%
\special{pa 400 2200}%
\special{fp}%
\special{pn 8}%
\special{pa 2000 2200}%
\special{pa 2000 2137}%
\special{fp}%
\special{pa 2000 2107}%
\special{pa 2000 2098}%
\special{fp}%
\special{pa 2000 2068}%
\special{pa 2000 2005}%
\special{fp}%
\special{pa 2000 1974}%
\special{pa 2000 1966}%
\special{fp}%
\special{pa 2000 1936}%
\special{pa 2000 1873}%
\special{fp}%
\special{pa 2000 1842}%
\special{pa 2000 1834}%
\special{fp}%
\special{pa 2000 1804}%
\special{pa 2000 1741}%
\special{fp}%
\special{pa 2000 1710}%
\special{pa 2000 1702}%
\special{fp}%
\special{pa 2000 1672}%
\special{pa 2000 1609}%
\special{fp}%
\special{pa 2000 1579}%
\special{pa 2000 1571}%
\special{fp}%
\special{pa 2000 1540}%
\special{pa 2000 1477}%
\special{fp}%
\special{pa 2000 1447}%
\special{pa 2000 1439}%
\special{fp}%
\special{pa 2000 1408}%
\special{pa 2000 1345}%
\special{fp}%
\special{pa 2000 1315}%
\special{pa 2000 1307}%
\special{fp}%
\special{pa 2000 1276}%
\special{pa 2000 1213}%
\special{fp}%
\special{pa 2000 1183}%
\special{pa 2000 1175}%
\special{fp}%
\special{pa 2000 1144}%
\special{pa 2000 1081}%
\special{fp}%
\special{pa 2000 1051}%
\special{pa 2000 1043}%
\special{fp}%
\special{pa 2000 1012}%
\special{pa 2000 949}%
\special{fp}%
\special{pa 2000 919}%
\special{pa 2000 911}%
\special{fp}%
\special{pa 2000 880}%
\special{pa 2000 817}%
\special{fp}%
\special{pa 2000 787}%
\special{pa 2000 779}%
\special{fp}%
\special{pa 2000 748}%
\special{pa 2000 685}%
\special{fp}%
\special{pa 2000 655}%
\special{pa 2000 647}%
\special{fp}%
\special{pa 2000 616}%
\special{pa 2000 553}%
\special{fp}%
\special{pa 2000 523}%
\special{pa 2000 515}%
\special{fp}%
\special{pa 2000 484}%
\special{pa 2000 421}%
\special{fp}%
\special{pa 2000 391}%
\special{pa 2000 383}%
\special{fp}%
\special{pa 2000 352}%
\special{pa 2000 289}%
\special{fp}%
\special{pa 2000 259}%
\special{pa 2000 251}%
\special{fp}%
\special{pa 2000 220}%
\special{pa 2000 200}%
\special{fp}%
\put(22.3000,-12.5000){\makebox(0,0)[lb]{$r$}}%
\put(20.3000,-11.8000){\makebox(0,0)[lb]{$1$}}%
\put(3.6000,-11.7000){\makebox(0,0)[rb]{$-1$}}%
\put(14.8000,-13.0000){\makebox(0,0)[lb]{{\tiny $r_0$}}}%
\put(9.4000,-10.4000){\makebox(0,0)[lb]{{\tiny $\psi(r_0)$}}}%
%
\special{pn 4}%
\special{pa 1520 980}%
\special{pa 1200 980}%
\special{dt 0.045}%
\special{pa 1520 980}%
\special{pa 1520 1200}%
\special{dt 0.045}%
%
\special{pn 8}%
\special{pa 1840 200}%
\special{pa 1840 1200}%
\special{da 0.070}%
\put(17.8000,-13.0000){\makebox(0,0)[lb]{{\tiny $r_1$}}}%
%
\special{pn 8}%
\special{pa 200 1200}%
\special{pa 2200 1200}%
\special{fp}%
\special{sh 1}%
\special{pa 2200 1200}%
\special{pa 2133 1180}%
\special{pa 2147 1200}%
\special{pa 2133 1220}%
\special{pa 2200 1200}%
\special{fp}%
%
\special{pn 8}%
\special{pa 1200 2200}%
\special{pa 1200 200}%
\special{fp}%
\put(10.5000,-1.8000){\makebox(0,0)[lb]{$r=R$}}%
\end{picture}}}
\caption{The behavior of the graph of $\psi$ in Lemma \ref{thm:psi-shape1}.}
\end{figure}

\begin{lemma}\label{thm:psi-shape2}
If there exists $r_0\in\left(-1,R\right)$ with $0<\psi(r_0)<\eta(r_0)$, there exists $C\in\left(\psi(r_0),+\infty\right)$ such that
\begin{equation*}
\lim_{r\uparrow R}\psi(r)=C.
\end{equation*}
\end{lemma}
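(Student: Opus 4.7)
The plan has three stages: establish (i) the barrier $\psi<\eta$ on $[r_0,R)$, (ii) existence of the monotone limit $C\in(\psi(r_0),+\infty]$, and (iii) finiteness of $C$. For (i) and (ii): a direct differentiation yields
\[
\eta'(r)=\frac{1-rR}{(n-1)\sqrt{1-r^2}\,(R-r)^2}>0\qquad\text{on }(-1,R),
\]
since $|r|,|R|<1$. If $\psi$ first met $\eta$ at some $r^*\in(r_0,R)$, Lemma \ref{thm:sign}(ii)(b) would give $\psi'(r^*)=0$, hence $(\eta-\psi)'(r^*)=\eta'(r^*)>0$ with $(\eta-\psi)(r^*)=0$; this forces $\eta<\psi$ immediately to the left of $r^*$, contradicting the barrier on $[r_0,r^*)$. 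So $\psi<\eta$ throughout $[r_0,R)$, Lemma \ref{thm:sign}(ii)(a) yields $\psi'>0$, and $\psi\le\eta(r_1)<+\infty$ on every $[r_0,r_1]\subset[r_0,R)$. The monotone limit $C:=\lim_{r\uparrow R}\psi(r)\in(\psi(r_0),+\infty]$ therefore exists.

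The hard step is (iii); the barrier is useless at $R$ because $\eta(R^-)=+\infty$. I pass to the phase angles $\omega:=\arctan\psi$ and $\omega_0:=\arctan\eta$, so that $\Omega:=\omega_0-\omega>0$ on $[r_0,R)$; since $\omega_0(r)\to\pi/2$, finiteness of $C$ is equivalent to $\Omega(R^-)>0$. Using $(n-1)(R-r)/\sqrt{1-r^2}=1/\eta$, the equation (\ref{eq:isopara-soliton-sphere3}) simplifies to
\[
\omega'(r)=\frac{1-\psi(r)/\eta(r)}{k\sqrt{1-r^2}},\qquad 0<\omega'(r)<\frac{1}{k\sqrt{1-r^2}},
\]
while differentiating $\omega_0=\arctan\eta$ directly gives the finite positive limit $\omega_0'(r)\to(n-1)/\sqrt{1-R^2}$ as $r\uparrow R$. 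Assuming for contradiction $\Omega(r)\to 0$, the upper bound on $\omega'$ yields
\[
\liminf_{r\uparrow R}\Omega'(r)\ge\frac{n-1}{\sqrt{1-R^2}}-\frac{1}{k\sqrt{1-R^2}}=\frac{k(n-1)-1}{k\sqrt{1-R^2}}\ge 0,
\]
strictly positive whenever $k(n-1)>1$.

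When $k(n-1)>1$, $\Omega'>0$ on a left neighborhood of $R$, so $\Omega$ is strictly increasing there and cannot decrease to $0$, a contradiction. The borderline $k(n-1)=1$ (which occurs only for $k=1,n=2$) is handled by analyzing $\phi:=(R-r)\psi$ directly: the ODE yields $\phi'\le-3\phi/(4(R-r))+O(R-r)$ (since $\rho(1-\rho)\le 1/4$ forces the main bracket $\le -3/4$), hence $\phi(r)\to 0$; the leading asymptotic $\phi'\sim-\phi/(R-r)$ then integrates to $\phi\sim K(R-r)$, making $\psi\sim K$ bounded and contradicting $\psi\to+\infty$. In every case $C<+\infty$, and $C>\psi(r_0)$ follows from strict monotonicity. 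The main obstacle throughout is exactly this last step: the pointwise barrier degenerates at $R$, and the finiteness of $C$ has to be extracted from the quantitative comparison between the rates at which $\omega$ and $\omega_0$ approach $\pi/2$.
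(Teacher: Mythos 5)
Your overall route is genuinely different from the paper's. The paper discards the negative term in (\ref{eq:isopara-soliton-sphere3}) to get $\psi'/(\psi^2+1)<1/(k\sqrt{1-r^2})$, integrates this explicitly to $\arctan\psi(r)<h_2(r)$ with $h_2$ elementary, and then uses the hypothesis $\psi(r_0)<\eta(r_0)$ together with the identity $\cot\frac{\arcsin R-\arcsin r_0}{2}=\frac{\sqrt{1-R^2}+\sqrt{1-r_0^2}}{R-r_0}$ to check $h_2(R)<\pi/2$ directly; this works uniformly in $k$ and $n$ with no case distinction. You instead compare the rates at which $\arctan\psi$ and $\arctan\eta$ approach $\pi/2$. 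Your barrier step, the identity $\omega'=(1-\psi/\eta)/(k\sqrt{1-r^2})$, the limit $\omega_0'\to(n-1)/\sqrt{1-R^2}$, and the conclusion in the case $k(n-1)>1$ are all correct. The price of your route is that it degenerates exactly when $k(n-1)=1$, i.e.\ $k=1$, $n=2$, and that is where there is a genuine gap.

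In that borderline case your key inference is: ``the leading asymptotic $\phi'\sim-\phi/(R-r)$ then integrates to $\phi\sim K(R-r)$, making $\psi\sim K$ bounded.'' This implication is false as stated. For example $\phi(r)=(R-r)\log\frac{1}{R-r}$ satisfies $\phi'=-\phi/(R-r)+O(1)$, hence $\phi'\sim-\phi/(R-r)$, yet $\psi=\phi/(R-r)=\log\frac{1}{R-r}\to+\infty$; knowing only $\phi'/\phi=-(1+o(1))/(R-r)$ gives $\phi=(R-r)^{1+o(1)}$, which does not control $\phi/(R-r)$. The conclusion is recoverable, but you must run one more bootstrap iteration with quantitative errors: your first estimate gives $\phi=O((R-r)^{3/4})$, hence $\rho=\psi/\eta=O((R-r)^{3/4})$; feeding this back into $\phi'=-\psi(1-\rho+\rho^2)+O(R-r)\le-\frac{\phi}{R-r}+\frac{\rho\,\phi}{R-r}+O(R-r)$ yields $\bigl(\phi\,(R-r)^{-1}\bigr)'\le O\bigl((R-r)^{-1/2}\bigr)$, whose right-hand side is integrable up to $R$, so $\psi=\phi/(R-r)$ is bounded. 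Either supply this bootstrap or adopt the paper's integrated bound, which avoids the case split altogether.
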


\begin{proof}
First, we consider the case $k=1$.
For all $r\in(r_0,0)$, we find $\psi'(r)>0$ and $0<\psi(r)<\eta(r)$.
Also, we have
\begin{align*}
\psi'(r)&=\frac{1}{1-r^2}\Bigl(\psi(r)^2+1\Bigl)\Bigl((n-1)r\psi(r)+\sqrt{1-r^2}\Bigl)\\
&<\frac{1}{\sqrt{1-r^2}}\left(\psi(r)^2+1\right).
\end{align*} 
Therefore, we find
\begin{equation*}
\frac{\psi'(r)}{\psi(r)^2+1}<\frac{1}{\sqrt{1-r^2}}.
\end{equation*}
By integrating from $r_0$ to $r$, we have
\begin{equation*}
\arctan{\psi(r)}<\arcsin{r}-\arcsin{r_0}+\arctan{\psi(r_0)}=:h_2(r).
\end{equation*}
Here, $h_2$ is increasing on $(r_0,0)$ and 
\begin{equation*}
h_2(r_0)=\arctan{\psi(r_0)},\quad h_2(0)=\arctan{\psi(r_0)}-\arcsin{r_0}.
\end{equation*}
Since we find
\begin{align*}
\psi(r_0)<\eta(r_0)=-\frac{\sqrt{1-r_0^2}}{(n-1)r_0}\le-\frac{\sqrt{1-r_0^2}}{r_0}=\tan{\left(\arcsin{r_0}+\frac{\pi}{2}\right)},
\end{align*}
we have
\begin{equation*}
h_2(0)=\arctan{\psi(r_0)}-\arcsin{r_0}<\frac{\pi}{2}.
\end{equation*}
Therefore, $\tan{\left(h_2(r)\right)}$ is defined on $(r_0,0]$ and
\begin{equation*}
\psi(r)<\tan{\left(h_2(r)\right)}.
\end{equation*}
Then, we obtain the statemant of this lemma for $k=1$.

Next, we consider the case $k=2$, $3$, $4$ or $6$.
For all $r\in(r_0,R)$, we find $\psi(r)>0$ and $0<\psi(r)<\eta(r)$.
Also, we have
\begin{align*}
\psi'(r)&=\frac{1}{k(1-r^2)}\Bigl(\psi(r)^2+1\Bigl)\Bigl((n-1)(r-R)\psi(r)+\sqrt{1-r^2}\Bigl)\\
&<\frac{1}{2\sqrt{1-r^2}}\left(\psi(r)^2+1\right).
\end{align*} 
Therefore, we find
\begin{equation*}
\frac{\psi'(r)}{\psi(r)^2+1}<\frac{1}{2\sqrt{1-r^2}}.
\end{equation*}
By integrating from $r_0$ to $r$, we have
\begin{equation*}
\arctan{\psi(r)}<\frac{1}{2}\arcsin{r}-\frac{1}{2}\arcsin{r_0}+\arctan{\psi(r_0)}=:\hat{h}_2(r).
\end{equation*}
Here, $\hat{h}_2$ is increasing on $(r_0,R)$ and 
\begin{equation*}
\hat{h}_2(r_0)=\arctan{\psi(r_0)},\quad \hat{h}_2(R)=\arctan{\psi(r_0)}+\frac{1}{2}\arcsin{R}-\frac{1}{2}\arcsin{r_0}.
\end{equation*}
Since we find
\begin{align*}
\psi(r_0)<\eta(r_0)&=-\frac{\sqrt{1-r_0^2}}{(n-1)(r_0-R)}\\
&<-\frac{\sqrt{1-r_0^2}+\sqrt{1-R^2}}{r_0-R}\\
&=\tan{\left(\frac{1}{2}\arcsin{r_0}-\frac{1}{2}\arcsin{R}+\frac{\pi}{2}\right)},
\end{align*}
we have
\begin{equation*}
\hat{h}_2(R)=\arctan{\psi(r_0)}-\arcsin{r_0}<\frac{\pi}{2}.
\end{equation*}
Therefore, $\tan{\left(\hat{h}_2(r)\right)}$ is defined on $(r_0,R]$ and
\begin{equation*}
\psi(r)<\tan{\left(\hat{h}_2(r)\right)}.
\end{equation*}
Then, we obtain the statement of this lemma.
\end{proof}

\begin{figure}[H]
\centering
\scalebox{1.0}{{\unitlength 0.1in%
\begin{picture}(20.9000,21.5000)(1.4000,-22.0000)%
\special{pn 20}%
\special{pn 20}%
\special{pa 542 2200}%
\special{pa 544 2180}%
\special{ip}%
\special{pa 546 2154}%
\special{pa 548 2134}%
\special{ip}%
\special{pa 551 2109}%
\special{pa 553 2088}%
\special{ip}%
\special{pa 556 2063}%
\special{pa 559 2043}%
\special{ip}%
\special{pa 562 2018}%
\special{pa 564 1997}%
\special{ip}%
\special{pa 568 1972}%
\special{pa 570 1954}%
\special{pa 570 1952}%
\special{ip}%
\special{pa 574 1927}%
\special{pa 575 1918}%
\special{pa 577 1906}%
\special{ip}%
\special{pa 580 1881}%
\special{pa 584 1861}%
\special{ip}%
\special{pa 588 1836}%
\special{pa 590 1822}%
\special{pa 591 1816}%
\special{ip}%
\special{pa 595 1791}%
\special{pa 599 1771}%
\special{ip}%
\special{pa 604 1746}%
\special{pa 605 1739}%
\special{pa 608 1726}%
\special{ip}%
\special{pa 613 1701}%
\special{pa 617 1681}%
\special{ip}%
\special{pa 623 1656}%
\special{pa 625 1646}%
\special{pa 627 1636}%
\special{ip}%
\special{pa 633 1611}%
\special{pa 635 1604}%
\special{pa 638 1592}%
\special{ip}%
\special{pa 645 1567}%
\special{pa 645 1566}%
\special{pa 650 1548}%
\special{pa 650 1547}%
\special{ip}%
\special{pa 657 1523}%
\special{pa 660 1514}%
\special{pa 663 1503}%
\special{ip}%
\special{pa 671 1479}%
\special{pa 675 1467}%
\special{pa 678 1460}%
\special{ip}%
\special{pa 686 1436}%
\special{pa 690 1425}%
\special{pa 693 1417}%
\special{ip}%
\special{pa 702 1393}%
\special{pa 705 1386}%
\special{pa 710 1374}%
\special{ip}%
\special{pa 720 1351}%
\special{pa 725 1339}%
\special{pa 728 1332}%
\special{ip}%
\special{pa 739 1309}%
\special{pa 740 1307}%
\special{pa 748 1291}%
\special{ip}%
\special{pa 760 1268}%
\special{pa 770 1250}%
\special{ip}%
\special{pa 783 1228}%
\special{pa 785 1224}%
\special{pa 793 1211}%
\special{ip}%
\special{pa 807 1189}%
\special{pa 810 1185}%
\special{pa 815 1177}%
\special{pa 818 1172}%
\special{ip}%
\special{pa 833 1151}%
\special{pa 835 1148}%
\special{pa 840 1142}%
\special{pa 845 1135}%
\special{ip}%
\special{pa 860 1115}%
\special{pa 873 1099}%
\special{ip}%
\special{pa 890 1079}%
\special{pa 900 1067}%
\special{pa 903 1064}%
\special{ip}%
\special{pa 920 1045}%
\special{pa 925 1040}%
\special{pa 930 1034}%
\special{pa 980 984}%
\special{pa 985 980}%
\special{pa 995 970}%
\special{pa 1000 966}%
\special{pa 1005 961}%
\special{pa 1010 957}%
\special{pa 1015 952}%
\special{pa 1020 948}%
\special{pa 1025 943}%
\special{pa 1030 939}%
\special{pa 1035 934}%
\special{pa 1045 926}%
\special{pa 1050 921}%
\special{pa 1065 909}%
\special{pa 1070 904}%
\special{pa 1100 880}%
\special{pa 1105 875}%
\special{pa 1160 831}%
\special{pa 1165 828}%
\special{pa 1205 796}%
\special{fp}%
\special{pn 20}%
\special{pa 1221 783}%
\special{pa 1235 772}%
\special{pa 1240 769}%
\special{pa 1242 768}%
\special{ip}%
\special{pa 1258 755}%
\special{pa 1278 739}%
\special{ip}%
\special{pa 1294 726}%
\special{pa 1295 725}%
\special{pa 1300 720}%
\special{pa 1313 709}%
\special{ip}%
\special{pa 1329 696}%
\special{pa 1330 696}%
\special{pa 1335 691}%
\special{pa 1349 680}%
\special{ip}%
\special{pa 1365 666}%
\special{pa 1365 666}%
\special{pa 1370 661}%
\special{pa 1375 657}%
\special{pa 1380 652}%
\special{pa 1384 649}%
\special{ip}%
\special{pa 1399 635}%
\special{pa 1400 634}%
\special{pa 1405 630}%
\special{pa 1415 620}%
\special{pa 1418 618}%
\special{ip}%
\special{pa 1432 604}%
\special{pa 1451 585}%
\special{ip}%
\special{pa 1465 571}%
\special{pa 1470 566}%
\special{pa 1475 560}%
\special{pa 1480 555}%
\special{pa 1483 552}%
\special{ip}%
\special{pa 1496 537}%
\special{pa 1500 533}%
\special{pa 1513 517}%
\special{ip}%
\special{pa 1526 501}%
\special{pa 1540 485}%
\special{pa 1543 481}%
\special{ip}%
\special{pa 1555 465}%
\special{pa 1560 458}%
\special{pa 1565 452}%
\special{pa 1570 445}%
\special{pa 1570 444}%
\special{ip}%
\special{pa 1582 427}%
\special{pa 1585 423}%
\special{pa 1590 415}%
\special{pa 1595 408}%
\special{pa 1596 406}%
\special{ip}%
\special{pa 1607 389}%
\special{pa 1615 376}%
\special{pa 1620 367}%
\special{pa 1620 366}%
\special{ip}%
\special{pa 1631 349}%
\special{pa 1640 332}%
\special{pa 1643 326}%
\special{ip}%
\special{pa 1653 308}%
\special{pa 1660 293}%
\special{pa 1664 285}%
\special{ip}%
\special{pa 1673 266}%
\special{pa 1675 261}%
\special{pa 1680 249}%
\special{pa 1683 243}%
\special{ip}%
\special{pa 1691 224}%
\special{pa 1695 214}%
\special{pa 1700 201}%
\special{pa 1701 200}%
\special{ip}%
\special{pn 20}%
\special{pa 2144 2179}%
\special{pa 2145 2167}%
\special{pa 2147 2153}%
\special{ip}%
\special{pa 2149 2132}%
\special{pa 2150 2119}%
\special{pa 2152 2105}%
\special{ip}%
\special{pa 2154 2084}%
\special{pa 2155 2073}%
\special{pa 2157 2058}%
\special{ip}%
\special{pa 2159 2037}%
\special{pa 2160 2031}%
\special{pa 2163 2011}%
\special{ip}%
\special{pa 2165 1990}%
\special{pa 2169 1963}%
\special{ip}%
\special{pa 2172 1942}%
\special{pa 2175 1918}%
\special{pa 2175 1916}%
\special{ip}%
\special{pa 2178 1895}%
\special{pa 2180 1884}%
\special{pa 2182 1869}%
\special{ip}%
\special{pa 2186 1848}%
\special{pa 2190 1822}%
\special{ip}%
\special{pn 13}%
\special{pn 13}%
\special{pa 400 1200}%
\special{pa 403 1187}%
\special{fp}%
\special{pa 417 1158}%
\special{pa 426 1148}%
\special{fp}%
\special{pa 449 1127}%
\special{pa 460 1118}%
\special{fp}%
\special{pa 486 1099}%
\special{pa 497 1092}%
\special{fp}%
\special{pa 524 1074}%
\special{pa 534 1066}%
\special{fp}%
\special{pa 562 1049}%
\special{pa 573 1041}%
\special{fp}%
\special{pa 600 1024}%
\special{pa 611 1017}%
\special{fp}%
\special{pa 637 998}%
\special{pa 648 990}%
\special{fp}%
\special{pa 674 971}%
\special{pa 684 963}%
\special{fp}%
\special{pa 709 943}%
\special{pa 719 934}%
\special{fp}%
\special{pa 743 912}%
\special{pa 753 903}%
\special{fp}%
\special{pa 776 880}%
\special{pa 785 870}%
\special{fp}%
\special{pa 806 846}%
\special{pa 815 836}%
\special{fp}%
\special{pa 835 811}%
\special{pa 842 800}%
\special{fp}%
\special{pa 861 773}%
\special{pa 868 762}%
\special{fp}%
\special{pa 884 734}%
\special{pa 891 723}%
\special{fp}%
\special{pa 906 694}%
\special{pa 912 682}%
\special{fp}%
\special{pa 925 653}%
\special{pa 931 641}%
\special{fp}%
\special{pa 943 611}%
\special{pa 948 598}%
\special{fp}%
\special{pa 959 568}%
\special{pa 963 555}%
\special{fp}%
\special{pa 973 525}%
\special{pa 977 512}%
\special{fp}%
\special{pa 986 481}%
\special{pa 989 468}%
\special{fp}%
\special{pa 997 437}%
\special{pa 1000 424}%
\special{fp}%
\special{pa 1008 392}%
\special{pa 1011 379}%
\special{fp}%
\special{pa 1017 348}%
\special{pa 1020 335}%
\special{fp}%
\special{pa 1026 303}%
\special{pa 1028 290}%
\special{fp}%
\special{pa 1034 258}%
\special{pa 1036 245}%
\special{fp}%
\special{pa 1041 213}%
\special{pa 1043 200}%
\special{fp}%
\special{pn 13}%
\special{pa 1357 2200}%
\special{pa 1359 2187}%
\special{fp}%
\special{pa 1364 2155}%
\special{pa 1366 2142}%
\special{fp}%
\special{pa 1372 2110}%
\special{pa 1374 2097}%
\special{fp}%
\special{pa 1380 2065}%
\special{pa 1383 2052}%
\special{fp}%
\special{pa 1389 2021}%
\special{pa 1392 2008}%
\special{fp}%
\special{pa 1400 1976}%
\special{pa 1403 1963}%
\special{fp}%
\special{pa 1411 1932}%
\special{pa 1414 1919}%
\special{fp}%
\special{pa 1423 1888}%
\special{pa 1427 1875}%
\special{fp}%
\special{pa 1437 1845}%
\special{pa 1441 1832}%
\special{fp}%
\special{pa 1452 1802}%
\special{pa 1457 1789}%
\special{fp}%
\special{pa 1469 1759}%
\special{pa 1475 1747}%
\special{fp}%
\special{pa 1488 1718}%
\special{pa 1494 1706}%
\special{fp}%
\special{pa 1509 1677}%
\special{pa 1516 1666}%
\special{fp}%
\special{pa 1532 1638}%
\special{pa 1539 1627}%
\special{fp}%
\special{pa 1558 1600}%
\special{pa 1565 1589}%
\special{fp}%
\special{pa 1585 1564}%
\special{pa 1594 1554}%
\special{fp}%
\special{pa 1615 1530}%
\special{pa 1624 1520}%
\special{fp}%
\special{pa 1647 1497}%
\special{pa 1657 1488}%
\special{fp}%
\special{pa 1681 1466}%
\special{pa 1691 1457}%
\special{fp}%
\special{pa 1716 1437}%
\special{pa 1726 1429}%
\special{fp}%
\special{pa 1752 1410}%
\special{pa 1763 1402}%
\special{fp}%
\special{pa 1789 1383}%
\special{pa 1800 1376}%
\special{fp}%
\special{pa 1827 1359}%
\special{pa 1838 1351}%
\special{fp}%
\special{pa 1866 1334}%
\special{pa 1876 1326}%
\special{fp}%
\special{pa 1903 1308}%
\special{pa 1914 1301}%
\special{fp}%
\special{pa 1940 1282}%
\special{pa 1951 1273}%
\special{fp}%
\special{pa 1974 1252}%
\special{pa 1983 1242}%
\special{fp}%
\special{pa 1997 1213}%
\special{pa 2000 1200}%
\special{fp}%
\put(13.2000,-7.0000){\makebox(0,0)[lb]{{\large $\psi$}}}%
%
\special{pn 8}%
\special{pn 8}%
\special{pa 400 200}%
\special{pa 400 263}%
\special{fp}%
\special{pa 400 293}%
\special{pa 400 302}%
\special{fp}%
\special{pa 400 332}%
\special{pa 400 395}%
\special{fp}%
\special{pa 400 426}%
\special{pa 400 434}%
\special{fp}%
\special{pa 400 464}%
\special{pa 400 527}%
\special{fp}%
\special{pa 400 558}%
\special{pa 400 566}%
\special{fp}%
\special{pa 400 596}%
\special{pa 400 659}%
\special{fp}%
\special{pa 400 690}%
\special{pa 400 698}%
\special{fp}%
\special{pa 400 728}%
\special{pa 400 791}%
\special{fp}%
\special{pa 400 821}%
\special{pa 400 829}%
\special{fp}%
\special{pa 400 860}%
\special{pa 400 923}%
\special{fp}%
\special{pa 400 953}%
\special{pa 400 961}%
\special{fp}%
\special{pa 400 992}%
\special{pa 400 1055}%
\special{fp}%
\special{pa 400 1085}%
\special{pa 400 1093}%
\special{fp}%
\special{pa 400 1124}%
\special{pa 400 1187}%
\special{fp}%
\special{pa 400 1217}%
\special{pa 400 1225}%
\special{fp}%
\special{pa 400 1256}%
\special{pa 400 1319}%
\special{fp}%
\special{pa 400 1349}%
\special{pa 400 1357}%
\special{fp}%
\special{pa 400 1388}%
\special{pa 400 1451}%
\special{fp}%
\special{pa 400 1481}%
\special{pa 400 1489}%
\special{fp}%
\special{pa 400 1520}%
\special{pa 400 1583}%
\special{fp}%
\special{pa 400 1613}%
\special{pa 400 1621}%
\special{fp}%
\special{pa 400 1652}%
\special{pa 400 1715}%
\special{fp}%
\special{pa 400 1745}%
\special{pa 400 1753}%
\special{fp}%
\special{pa 400 1784}%
\special{pa 400 1847}%
\special{fp}%
\special{pa 400 1877}%
\special{pa 400 1885}%
\special{fp}%
\special{pa 400 1916}%
\special{pa 400 1979}%
\special{fp}%
\special{pa 400 2009}%
\special{pa 400 2017}%
\special{fp}%
\special{pa 400 2048}%
\special{pa 400 2111}%
\special{fp}%
\special{pa 400 2141}%
\special{pa 400 2149}%
\special{fp}%
\special{pa 400 2180}%
\special{pa 400 2200}%
\special{fp}%
\special{pn 8}%
\special{pa 2000 2200}%
\special{pa 2000 2137}%
\special{fp}%
\special{pa 2000 2107}%
\special{pa 2000 2098}%
\special{fp}%
\special{pa 2000 2068}%
\special{pa 2000 2005}%
\special{fp}%
\special{pa 2000 1974}%
\special{pa 2000 1966}%
\special{fp}%
\special{pa 2000 1936}%
\special{pa 2000 1873}%
\special{fp}%
\special{pa 2000 1842}%
\special{pa 2000 1834}%
\special{fp}%
\special{pa 2000 1804}%
\special{pa 2000 1741}%
\special{fp}%
\special{pa 2000 1710}%
\special{pa 2000 1702}%
\special{fp}%
\special{pa 2000 1672}%
\special{pa 2000 1609}%
\special{fp}%
\special{pa 2000 1579}%
\special{pa 2000 1571}%
\special{fp}%
\special{pa 2000 1540}%
\special{pa 2000 1477}%
\special{fp}%
\special{pa 2000 1447}%
\special{pa 2000 1439}%
\special{fp}%
\special{pa 2000 1408}%
\special{pa 2000 1345}%
\special{fp}%
\special{pa 2000 1315}%
\special{pa 2000 1307}%
\special{fp}%
\special{pa 2000 1276}%
\special{pa 2000 1213}%
\special{fp}%
\special{pa 2000 1183}%
\special{pa 2000 1175}%
\special{fp}%
\special{pa 2000 1144}%
\special{pa 2000 1081}%
\special{fp}%
\special{pa 2000 1051}%
\special{pa 2000 1043}%
\special{fp}%
\special{pa 2000 1012}%
\special{pa 2000 949}%
\special{fp}%
\special{pa 2000 919}%
\special{pa 2000 911}%
\special{fp}%
\special{pa 2000 880}%
\special{pa 2000 817}%
\special{fp}%
\special{pa 2000 787}%
\special{pa 2000 779}%
\special{fp}%
\special{pa 2000 748}%
\special{pa 2000 685}%
\special{fp}%
\special{pa 2000 655}%
\special{pa 2000 647}%
\special{fp}%
\special{pa 2000 616}%
\special{pa 2000 553}%
\special{fp}%
\special{pa 2000 523}%
\special{pa 2000 515}%
\special{fp}%
\special{pa 2000 484}%
\special{pa 2000 421}%
\special{fp}%
\special{pa 2000 391}%
\special{pa 2000 383}%
\special{fp}%
\special{pa 2000 352}%
\special{pa 2000 289}%
\special{fp}%
\special{pa 2000 259}%
\special{pa 2000 251}%
\special{fp}%
\special{pa 2000 220}%
\special{pa 2000 200}%
\special{fp}%
\put(22.3000,-12.5000){\makebox(0,0)[lb]{$r$}}%
\put(20.3000,-11.8000){\makebox(0,0)[lb]{$1$}}%
\put(3.6000,-11.7000){\makebox(0,0)[rb]{$-1$}}%
%
\special{pn 4}%
\special{pa 930 1030}%
\special{pa 930 1200}%
\special{dt 0.045}%
\special{pa 930 1030}%
\special{pa 1200 1030}%
\special{dt 0.045}%
\put(9.0000,-13.0000){\makebox(0,0)[lb]{{\tiny $r_0$}}}%
\put(12.2000,-10.6000){\makebox(0,0)[lb]{{\tiny $\psi(r_0)$}}}%
%
\special{pn 8}%
\special{pa 200 1200}%
\special{pa 2200 1200}%
\special{fp}%
\special{sh 1}%
\special{pa 2200 1200}%
\special{pa 2133 1180}%
\special{pa 2147 1200}%
\special{pa 2133 1220}%
\special{pa 2200 1200}%
\special{fp}%
%
\special{pn 8}%
\special{pa 1200 2200}%
\special{pa 1200 200}%
\special{fp}%
\put(10.5000,-1.8000){\makebox(0,0)[lb]{$r=R$}}%
\end{picture}}}
\caption{The behavior of the graph of $\psi$ in Lemma \ref{thm:psi-shape2}}
\end{figure}

\begin{lemma}\label{thm:psi-shape3}
If there exists $r_0\in(-1,R)$ with $\psi(r_0)>\eta(r_0)$, there exists $r_1\in(-1,r_0)$ such that 
\begin{equation*}
\lim_{r\downarrow r_1}\psi(r)=+\infty.
\end{equation*}
\end{lemma}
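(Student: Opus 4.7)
The plan is to mimic the structure of Lemma \ref{thm:psi-shape1}, working now in the direction of decreasing $r$. First, by Lemma \ref{thm:sign}(ii)(c), $\psi'(r)<0$ wherever $r<R$ and $\psi>\eta$, so $\psi$ strictly increases as $r$ decreases. A direct differentiation of $\eta$ gives
\[
\eta'(r)=\frac{1-rR}{(n-1)(r-R)^{2}\sqrt{1-r^{2}}}>0\quad\text{on }(-1,R),
\]
so $\eta$ strictly decreases as $r$ decreases. Consequently, along any subinterval of backward existence on which $\psi>\eta$, the gap $\psi-\eta$ is monotone increasing in the direction of decreasing $r$; a standard open--closed continuation argument (the monotonicity prevents $\psi-\eta$ from dropping back to $0$ at an interior point) then shows that $\psi>\eta$ persists on the entire maximal interval of backward existence $(r_1,r_0]$, with the uniform lower bound $\psi(r)-\eta(r)\ge\delta_0:=\psi(r_0)-\eta(r_0)>0$.

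Next I rewrite \eqref{eq:isopara-soliton-sphere3} in the form
\[
-\psi'(r)=\frac{(n-1)(R-r)\bigl(\psi(r)-\eta(r)\bigr)\bigl(\psi(r)^{2}+1\bigr)}{k(1-r^{2})},
\]
and combine $\psi-\eta\ge\delta_0$ with $\psi^{2}+1>\psi^{2}$ to obtain the Riccati-type estimate
\[
\frac{d}{dr}\!\left(\frac{1}{\psi(r)}\right)=-\frac{\psi'(r)}{\psi(r)^{2}}>\frac{(n-1)(R-r)\,\delta_0}{k(1-r^{2})}.
\]
Integrating from $r$ to $r_0$ yields
\[
\frac{1}{\psi(r_0)}-\frac{1}{\psi(r)}>H(r):=\frac{(n-1)\delta_0}{k}\int_{r}^{r_0}\frac{R-s}{1-s^{2}}\,ds,
\]
and a partial-fraction decomposition shows that the integrand is asymptotic to $(R+1)/[2(1+s)]$ near $s=-1$. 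Since $R+1>0$, we get $H(r)\uparrow+\infty$ as $r\downarrow-1$.

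To finish, pick the unique $\bar r_1\in(-1,r_0)$ with $H(\bar r_1)=1/\psi(r_0)$. For any $r\le\bar r_1$ at which $\psi$ existed, the previous estimate would force $1/\psi(r)\le0$, which is impossible because $\psi>\eta>0$. Hence $r_1\ge\bar r_1>-1$. Since the right-hand side of \eqref{eq:isopara-soliton-sphere3} is smooth in $(r,\psi)$ for $r\in(-1,1)$, standard ODE theory leaves blow-up as the only obstruction to continuing the solution past $r_1$; combined with the monotonicity established at the outset, this yields $\lim_{r\downarrow r_1}\psi(r)=+\infty$, as claimed.

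The main obstacle I anticipate is justifying in the first paragraph that $\psi>\eta$ truly holds along the whole maximal interval rather than only near $r_0$; the argument rests on the simultaneous monotonicity of $\psi$ and $-\eta$ in the direction of decreasing $r$. Once that is secured, the remainder is a routine Riccati-type calculation, and the structural reason the blow-up happens strictly inside $(-1,r_0)$ is the non-integrable singularity of $(R-r)/(1-r^{2})$ at $r=-1$, which forces the right-hand side of the integrated estimate to exceed $1/\psi(r_0)$ before the boundary is reached.
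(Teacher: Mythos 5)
Your proof is correct and follows essentially the same strategy as the paper: a Riccati-type estimate on $1/\psi$ whose comparison integral diverges at $r=-1$ (because $R+1>0$), forcing $1/\psi$ to reach $0$ at some $\overline{r}_1>-1$ and hence blow-up of the monotone solution there. The only differences are cosmetic and in your favor — you lower-bound the gap $\psi-\eta$ by the constant $\delta_0$ (the paper instead keeps $\sqrt{1-r^2}$ and bounds $(r-R)\psi$ by $(r-R)\psi(r_0)$, arriving at a messier $h_3$), and you explicitly justify via the monotonicity of $\eta$ that $\psi>\eta$ persists on the whole backward interval, a point the paper merely asserts.
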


\begin{proof}
For all $r\in(-1,r_0)$, we find $\psi'(r)<0$ and $\psi(r)>\eta(r)$.
Also, we have
\begin{align*}
\psi'(r)&=\frac{1}{k(1-r^2)}\left(\psi(r)^2+1\right)\left((n-1)(r-R)\psi(r)+\sqrt{1-r^2}\right)\\
&<\frac{1}{k(1-r^2)}\left((n-1)\psi(r_0)(r-R)+\sqrt{1-r^2}\right)\psi(r)^2.
\end{align*}
Therefore, we find
\begin{equation*}
\frac{\psi'(r)}{\psi(r)^2}<\frac{(n-1)}{k}\psi(r_0)\frac{r-R}{1-r^2}+\frac{1}{k\sqrt{1-r^2}}.
\end{equation*}
By integrating from $r_0$ to $r$, we have
\begin{align*}
\frac{1}{\psi(r)}<~&\frac{(n-1)\psi(r_0)}{2k}\log{(1-r^2)}-\frac{1}{k}\arcsin{r}\\
&+\frac{(n-1)R\psi(r_0)}{2k}\log{\frac{1+r}{1-r}}-\frac{(n-1)\psi(r_0)}{2k}\log{(1-r_0^2)}\\
&+\frac{1}{k}\arcsin{r_0}-\frac{(n-1)R\psi(r_0)}{2k}\log{\frac{1+r_0}{1-r_0}}+\frac{1}{\psi(r_0)}=:h_3(r).
\end{align*}
Here, $h_3$ is increasing on $(-1,r_0)$ and 
\begin{equation*}
h_3(r_0)=\frac{1}{\psi(r_0)}>0,\quad \lim_{r\downarrow -1}h_3(r)=-\infty.
\end{equation*}
Therefore, there exists $\overline{r}_1\in(-1,r_0)$ with $h_3(\overline{r}_1)=0$ and
\begin{equation*}
\psi(r)>\frac{1}{h_3(r)}\rightarrow+\infty\quad(~r~\downarrow~\overline{r}_1~)
\end{equation*}
Then, we obtain the statemant of this lemma.
\end{proof}

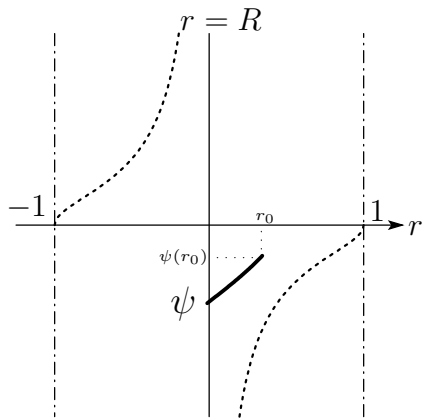
\begin{figure}[H]
\centering
\scalebox{1.0}{{\unitlength 0.1in%
\begin{picture}(20.9000,21.5000)(1.5000,-22.0000)%
\special{pn 13}%
\special{pn 13}%
\special{pa 400 1200}%
\special{pa 403 1187}%
\special{fp}%
\special{pa 417 1158}%
\special{pa 426 1148}%
\special{fp}%
\special{pa 449 1127}%
\special{pa 460 1118}%
\special{fp}%
\special{pa 486 1099}%
\special{pa 497 1092}%
\special{fp}%
\special{pa 524 1074}%
\special{pa 534 1066}%
\special{fp}%
\special{pa 562 1049}%
\special{pa 573 1041}%
\special{fp}%
\special{pa 600 1024}%
\special{pa 611 1017}%
\special{fp}%
\special{pa 637 998}%
\special{pa 648 990}%
\special{fp}%
\special{pa 674 971}%
\special{pa 684 963}%
\special{fp}%
\special{pa 709 943}%
\special{pa 719 934}%
\special{fp}%
\special{pa 743 912}%
\special{pa 753 903}%
\special{fp}%
\special{pa 776 880}%
\special{pa 785 870}%
\special{fp}%
\special{pa 806 846}%
\special{pa 815 836}%
\special{fp}%
\special{pa 835 811}%
\special{pa 842 800}%
\special{fp}%
\special{pa 861 773}%
\special{pa 868 762}%
\special{fp}%
\special{pa 884 734}%
\special{pa 891 723}%
\special{fp}%
\special{pa 906 694}%
\special{pa 912 682}%
\special{fp}%
\special{pa 925 653}%
\special{pa 931 641}%
\special{fp}%
\special{pa 943 611}%
\special{pa 948 598}%
\special{fp}%
\special{pa 959 568}%
\special{pa 963 555}%
\special{fp}%
\special{pa 973 525}%
\special{pa 977 512}%
\special{fp}%
\special{pa 986 481}%
\special{pa 989 468}%
\special{fp}%
\special{pa 997 437}%
\special{pa 1000 424}%
\special{fp}%
\special{pa 1008 392}%
\special{pa 1011 379}%
\special{fp}%
\special{pa 1017 348}%
\special{pa 1020 335}%
\special{fp}%
\special{pa 1026 303}%
\special{pa 1028 290}%
\special{fp}%
\special{pa 1034 258}%
\special{pa 1036 245}%
\special{fp}%
\special{pa 1041 213}%
\special{pa 1043 200}%
\special{fp}%
\special{pn 13}%
\special{pa 1357 2200}%
\special{pa 1359 2187}%
\special{fp}%
\special{pa 1364 2155}%
\special{pa 1366 2142}%
\special{fp}%
\special{pa 1372 2110}%
\special{pa 1374 2097}%
\special{fp}%
\special{pa 1380 2065}%
\special{pa 1383 2052}%
\special{fp}%
\special{pa 1389 2021}%
\special{pa 1392 2008}%
\special{fp}%
\special{pa 1400 1976}%
\special{pa 1403 1963}%
\special{fp}%
\special{pa 1411 1932}%
\special{pa 1414 1919}%
\special{fp}%
\special{pa 1423 1888}%
\special{pa 1427 1875}%
\special{fp}%
\special{pa 1437 1845}%
\special{pa 1441 1832}%
\special{fp}%
\special{pa 1452 1802}%
\special{pa 1457 1789}%
\special{fp}%
\special{pa 1469 1759}%
\special{pa 1475 1747}%
\special{fp}%
\special{pa 1488 1718}%
\special{pa 1494 1706}%
\special{fp}%
\special{pa 1509 1677}%
\special{pa 1516 1666}%
\special{fp}%
\special{pa 1532 1638}%
\special{pa 1539 1627}%
\special{fp}%
\special{pa 1558 1600}%
\special{pa 1565 1589}%
\special{fp}%
\special{pa 1585 1564}%
\special{pa 1594 1554}%
\special{fp}%
\special{pa 1615 1530}%
\special{pa 1624 1520}%
\special{fp}%
\special{pa 1647 1497}%
\special{pa 1657 1488}%
\special{fp}%
\special{pa 1681 1466}%
\special{pa 1691 1457}%
\special{fp}%
\special{pa 1716 1437}%
\special{pa 1726 1429}%
\special{fp}%
\special{pa 1752 1410}%
\special{pa 1763 1402}%
\special{fp}%
\special{pa 1789 1383}%
\special{pa 1800 1376}%
\special{fp}%
\special{pa 1827 1359}%
\special{pa 1838 1351}%
\special{fp}%
\special{pa 1866 1334}%
\special{pa 1876 1326}%
\special{fp}%
\special{pa 1903 1308}%
\special{pa 1914 1301}%
\special{fp}%
\special{pa 1940 1282}%
\special{pa 1951 1273}%
\special{fp}%
\special{pa 1974 1252}%
\special{pa 1983 1242}%
\special{fp}%
\special{pa 1997 1213}%
\special{pa 2000 1200}%
\special{fp}%
%
\special{pn 8}%
\special{pn 8}%
\special{pa 400 2200}%
\special{pa 400 2137}%
\special{fp}%
\special{pa 400 2107}%
\special{pa 400 2098}%
\special{fp}%
\special{pa 400 2068}%
\special{pa 400 2005}%
\special{fp}%
\special{pa 400 1974}%
\special{pa 400 1966}%
\special{fp}%
\special{pa 400 1936}%
\special{pa 400 1873}%
\special{fp}%
\special{pa 400 1842}%
\special{pa 400 1834}%
\special{fp}%
\special{pa 400 1804}%
\special{pa 400 1741}%
\special{fp}%
\special{pa 400 1710}%
\special{pa 400 1702}%
\special{fp}%
\special{pa 400 1672}%
\special{pa 400 1609}%
\special{fp}%
\special{pa 400 1579}%
\special{pa 400 1571}%
\special{fp}%
\special{pa 400 1540}%
\special{pa 400 1477}%
\special{fp}%
\special{pa 400 1447}%
\special{pa 400 1439}%
\special{fp}%
\special{pa 400 1408}%
\special{pa 400 1345}%
\special{fp}%
\special{pa 400 1315}%
\special{pa 400 1307}%
\special{fp}%
\special{pa 400 1276}%
\special{pa 400 1213}%
\special{fp}%
\special{pa 400 1183}%
\special{pa 400 1175}%
\special{fp}%
\special{pa 400 1144}%
\special{pa 400 1081}%
\special{fp}%
\special{pa 400 1051}%
\special{pa 400 1043}%
\special{fp}%
\special{pa 400 1012}%
\special{pa 400 949}%
\special{fp}%
\special{pa 400 919}%
\special{pa 400 911}%
\special{fp}%
\special{pa 400 880}%
\special{pa 400 817}%
\special{fp}%
\special{pa 400 787}%
\special{pa 400 779}%
\special{fp}%
\special{pa 400 748}%
\special{pa 400 685}%
\special{fp}%
\special{pa 400 655}%
\special{pa 400 647}%
\special{fp}%
\special{pa 400 616}%
\special{pa 400 553}%
\special{fp}%
\special{pa 400 523}%
\special{pa 400 515}%
\special{fp}%
\special{pa 400 484}%
\special{pa 400 421}%
\special{fp}%
\special{pa 400 391}%
\special{pa 400 383}%
\special{fp}%
\special{pa 400 352}%
\special{pa 400 289}%
\special{fp}%
\special{pa 400 259}%
\special{pa 400 251}%
\special{fp}%
\special{pa 400 220}%
\special{pa 400 200}%
\special{fp}%
\special{pn 8}%
\special{pa 2000 200}%
\special{pa 2000 263}%
\special{fp}%
\special{pa 2000 293}%
\special{pa 2000 302}%
\special{fp}%
\special{pa 2000 332}%
\special{pa 2000 395}%
\special{fp}%
\special{pa 2000 426}%
\special{pa 2000 434}%
\special{fp}%
\special{pa 2000 464}%
\special{pa 2000 527}%
\special{fp}%
\special{pa 2000 558}%
\special{pa 2000 566}%
\special{fp}%
\special{pa 2000 596}%
\special{pa 2000 659}%
\special{fp}%
\special{pa 2000 690}%
\special{pa 2000 698}%
\special{fp}%
\special{pa 2000 728}%
\special{pa 2000 791}%
\special{fp}%
\special{pa 2000 821}%
\special{pa 2000 829}%
\special{fp}%
\special{pa 2000 860}%
\special{pa 2000 923}%
\special{fp}%
\special{pa 2000 953}%
\special{pa 2000 961}%
\special{fp}%
\special{pa 2000 992}%
\special{pa 2000 1055}%
\special{fp}%
\special{pa 2000 1085}%
\special{pa 2000 1093}%
\special{fp}%
\special{pa 2000 1124}%
\special{pa 2000 1187}%
\special{fp}%
\special{pa 2000 1217}%
\special{pa 2000 1225}%
\special{fp}%
\special{pa 2000 1256}%
\special{pa 2000 1319}%
\special{fp}%
\special{pa 2000 1349}%
\special{pa 2000 1357}%
\special{fp}%
\special{pa 2000 1388}%
\special{pa 2000 1451}%
\special{fp}%
\special{pa 2000 1481}%
\special{pa 2000 1489}%
\special{fp}%
\special{pa 2000 1520}%
\special{pa 2000 1583}%
\special{fp}%
\special{pa 2000 1613}%
\special{pa 2000 1621}%
\special{fp}%
\special{pa 2000 1652}%
\special{pa 2000 1715}%
\special{fp}%
\special{pa 2000 1745}%
\special{pa 2000 1753}%
\special{fp}%
\special{pa 2000 1784}%
\special{pa 2000 1847}%
\special{fp}%
\special{pa 2000 1877}%
\special{pa 2000 1885}%
\special{fp}%
\special{pa 2000 1916}%
\special{pa 2000 1979}%
\special{fp}%
\special{pa 2000 2009}%
\special{pa 2000 2017}%
\special{fp}%
\special{pa 2000 2048}%
\special{pa 2000 2111}%
\special{fp}%
\special{pa 2000 2141}%
\special{pa 2000 2149}%
\special{fp}%
\special{pa 2000 2180}%
\special{pa 2000 2200}%
\special{fp}%
\put(22.4000,-12.8000){\makebox(0,0)[lb]{$r$}}%
\put(5.0000,-2.7000){\makebox(0,0)[lb]{{\large$\psi$}}}%
\put(20.3000,-11.8000){\makebox(0,0)[lb]{$1$}}%
\put(1.5000,-11.9000){\makebox(0,0)[lb]{$-1$}}%
\special{pn 20}%
\special{pa 466 200}%
\special{pa 470 296}%
\special{pa 475 366}%
\special{pa 480 416}%
\special{pa 485 455}%
\special{pa 490 486}%
\special{pa 495 513}%
\special{pa 500 536}%
\special{pa 505 556}%
\special{pa 510 574}%
\special{pa 515 590}%
\special{pa 520 605}%
\special{pa 525 619}%
\special{pa 535 643}%
\special{pa 540 654}%
\special{pa 545 664}%
\special{pa 555 682}%
\special{pa 570 706}%
\special{pa 580 720}%
\special{pa 595 738}%
\special{pa 600 743}%
\special{pa 605 749}%
\special{pa 615 759}%
\special{pa 620 763}%
\special{pa 625 768}%
\special{pa 645 784}%
\special{pa 655 790}%
\special{pa 660 794}%
\special{fp}%
\special{pn 20}%
\special{pa 678 804}%
\special{pa 680 805}%
\special{pa 685 808}%
\special{pa 690 810}%
\special{pa 695 813}%
\special{pa 702 816}%
\special{ip}%
\special{pa 721 823}%
\special{pa 725 825}%
\special{pa 730 826}%
\special{pa 735 828}%
\special{pa 740 829}%
\special{pa 745 831}%
\special{pa 746 831}%
\special{ip}%
\special{pa 766 835}%
\special{pa 775 837}%
\special{pa 780 837}%
\special{pa 790 839}%
\special{pa 792 839}%
\special{ip}%
\special{pa 813 840}%
\special{pa 835 840}%
\special{pa 839 839}%
\special{ip}%
\special{pa 860 837}%
\special{pa 860 837}%
\special{pa 865 837}%
\special{pa 885 833}%
\special{ip}%
\special{pa 906 828}%
\special{pa 910 826}%
\special{pa 915 825}%
\special{pa 930 819}%
\special{ip}%
\special{pa 949 810}%
\special{pa 950 810}%
\special{pa 955 808}%
\special{pa 960 805}%
\special{pa 965 803}%
\special{pa 972 799}%
\special{ip}%
\special{pa 990 787}%
\special{pa 995 784}%
\special{pa 1011 771}%
\special{ip}%
\special{pa 1026 758}%
\special{pa 1035 749}%
\special{pa 1040 743}%
\special{pa 1044 739}%
\special{ip}%
\special{pa 1058 723}%
\special{pa 1060 720}%
\special{pa 1070 706}%
\special{pa 1073 701}%
\special{ip}%
\special{pa 1084 684}%
\special{pa 1085 682}%
\special{pa 1095 664}%
\special{pa 1097 661}%
\special{ip}%
\special{pa 1106 642}%
\special{pa 1115 619}%
\special{pa 1115 618}%
\special{ip}%
\special{pa 1122 598}%
\special{pa 1125 590}%
\special{pa 1130 574}%
\special{pa 1130 573}%
\special{ip}%
\special{pa 1136 553}%
\special{pa 1140 536}%
\special{pa 1142 527}%
\special{ip}%
\special{pa 1146 507}%
\special{pa 1150 486}%
\special{pa 1151 481}%
\special{ip}%
\special{pa 1154 461}%
\special{pa 1155 455}%
\special{pa 1158 435}%
\special{ip}%
\special{pa 1160 414}%
\special{pa 1163 388}%
\special{ip}%
\special{pa 1165 367}%
\special{pa 1165 366}%
\special{pa 1167 341}%
\special{ip}%
\special{pa 1168 320}%
\special{pa 1170 296}%
\special{pa 1170 294}%
\special{ip}%
\special{pa 1171 273}%
\special{pa 1172 247}%
\special{ip}%
\special{pa 1173 226}%
\special{pa 1174 200}%
\special{ip}%
\special{pn 20}%
\special{pa 1907 220}%
\special{pa 1908 246}%
\special{ip}%
\special{pa 1909 266}%
\special{pa 1910 292}%
\special{ip}%
\special{pa 1911 312}%
\special{pa 1913 337}%
\special{ip}%
\special{pa 1914 358}%
\special{pa 1915 366}%
\special{pa 1917 383}%
\special{ip}%
\special{pa 1919 403}%
\special{pa 1920 416}%
\special{pa 1922 428}%
\special{ip}%
\special{pa 1924 449}%
\special{pa 1925 455}%
\special{pa 1928 474}%
\special{ip}%
\special{pa 1931 494}%
\special{pa 1935 513}%
\special{pa 1936 519}%
\special{ip}%
\special{pa 1941 539}%
\special{pa 1945 556}%
\special{pa 1947 563}%
\special{ip}%
\special{pa 1953 583}%
\special{pa 1955 590}%
\special{pa 1960 605}%
\special{pa 1961 607}%
\special{ip}%
\special{pa 1968 626}%
\special{pa 1975 643}%
\special{pa 1978 649}%
\special{ip}%
\special{pa 1987 668}%
\special{pa 1995 682}%
\special{pa 2000 690}%
\special{ip}%
\special{pa 2011 707}%
\special{pa 2020 720}%
\special{pa 2026 727}%
\special{ip}%
\special{pa 2039 742}%
\special{pa 2040 743}%
\special{pa 2045 749}%
\special{pa 2055 759}%
\special{pa 2057 761}%
\special{ip}%
\special{pa 2072 774}%
\special{pa 2085 784}%
\special{pa 2093 789}%
\special{ip}%
\special{pa 2110 800}%
\special{pa 2115 803}%
\special{pa 2120 805}%
\special{pa 2125 808}%
\special{pa 2130 810}%
\special{pa 2133 812}%
\special{ip}%
\special{pa 2151 820}%
\special{pa 2165 825}%
\special{pa 2170 826}%
\special{pa 2175 828}%
\special{pa 2175 828}%
\special{ip}%
\special{pa 2195 833}%
\special{pa 2215 837}%
\special{pa 2220 837}%
\special{ip}%
%
\special{pn 4}%
\special{pa 440 200}%
\special{pa 440 1200}%
\special{da 0.070}%
\put(6.0000,-13.1000){\makebox(0,0)[lb]{{\tiny $r_0$}}}%
\put(12.2000,-8.5000){\makebox(0,0)[lb]{{\tiny $\psi(r_0)$}}}%
\put(4.2000,-13.1000){\makebox(0,0)[lb]{{\tiny $r_1$}}}%
%
\special{pn 4}%
\special{pa 650 790}%
\special{pa 650 1200}%
\special{dt 0.045}%
\special{pa 650 790}%
\special{pa 1200 790}%
\special{dt 0.045}%
%
\special{pn 8}%
\special{pa 200 1200}%
\special{pa 2200 1200}%
\special{fp}%
\special{sh 1}%
\special{pa 2200 1200}%
\special{pa 2133 1180}%
\special{pa 2147 1200}%
\special{pa 2133 1220}%
\special{pa 2200 1200}%
\special{fp}%
%
\special{pn 8}%
\special{pa 1200 2200}%
\special{pa 1200 200}%
\special{fp}%
\put(10.5000,-1.8000){\makebox(0,0)[lb]{$r=R$}}%
\end{picture}}}
\caption{The behavior of the graph of $\psi$ in Lemma \ref{thm:psi-shape3}}
\end{figure}

Since the existence of the solution $\psi$ of (\ref{eq:isopara-soliton-sphere3}) which is definded to $r=-1$ could not be excluded, we consider that case.

\begin{lemma}\label{thm:psi-shape4}
If $\psi$ is defined to $r=-1$, $\psi(-1)=0$ and $V'(-1)=\frac{1}{k(k+(n-1)(1+R))}$
\end{lemma}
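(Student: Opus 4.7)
\medskip

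The plan has two steps: first to force $\psi(-1)=0$ from the shape of the ODE near the singular endpoint $r=-1$, and then to pin down the rate at which $\psi$ vanishes in order to read off $V'(-1)$ from the relation $V'=\psi/(k\sqrt{1-r^2})$.

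For the first step I would argue by contradiction. Suppose $\psi$ extends continuously to $r=-1$ with $\psi(-1)=\psi_0\neq 0$. Because $R\in(-1,1)$ we have $1+R>0$, so the bracket in \eqref{eq:isopara-soliton-sphere3} satisfies
\[
(n-1)(r-R)\psi(r)+\sqrt{1-r^2}\;\longrightarrow\;-(n-1)(1+R)\psi_0\neq 0
\]
as $r\downarrow -1$, while $1-r^2\sim 2(1+r)$. Consequently
\[
\psi'(r)\sim \frac{-(n-1)(1+R)\psi_0(\psi_0^2+1)}{2k(1+r)}\qquad (r\downarrow -1),
\]
and this non-zero constant times $1/(1+r)$ is not integrable on any right-neighbourhood of $-1$. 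Integrating $\psi'$ over such a neighbourhood would force $|\psi|\to\infty$, contradicting the assumed continuity. Hence $\psi(-1)=0$.

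For the second step I would analyse the leading-order behaviour of $\psi$ near $r=-1$. Setting $s=1+r$, $1-r^2=s(2-s)\sim 2s$, $r-R=-(1+R)+s$, and $\psi(-1)=0$, equation \eqref{eq:isopara-soliton-sphere3} linearises to
\[
\psi'(r)=\frac{1}{2ks}\bigl(-(n-1)(1+R)\psi(r)+\sqrt{2s}\bigr)+O(1)\qquad (s\downarrow 0).
\]
This linear ODE has integrating factor $s^{A}$ with $A:=(n-1)(1+R)/(2k)>0$. Its general solution is
\[
\psi=\frac{\sqrt{2s}}{k+(n-1)(1+R)}+Cs^{-A}+o(\sqrt{s}),
\]
and since $A>0$ the homogeneous mode $Cs^{-A}$ blows up unless $C=0$; the requirement $\psi(-1)=0$ therefore forces $C=0$ and yields
\[
\psi(r)\;\sim\;\frac{\sqrt{2(1+r)}}{k+(n-1)(1+R)}\qquad (r\downarrow -1).
\]

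Finally, combining this asymptotic with $V'(r)=\psi(r)/(k\sqrt{1-r^2})$ and $\sqrt{1-r^2}\sim\sqrt{2(1+r)}$ gives
\[
V'(-1)=\lim_{r\downarrow -1}\frac{\psi(r)}{k\sqrt{1-r^2}}=\frac{1}{k\bigl(k+(n-1)(1+R)\bigr)},
\]
which is the desired identity. I expect the main technical obstacle to lie in step two, namely the rigorous justification that the perturbation terms swept into the $O(1)$ remainder do not alter the leading constant; this is where one has to either carry out the fixed-point/Gronwall argument on the substituted variable $\omega(s):=\psi(r)/\sqrt{s}$ or verify directly that the homogeneous mode must vanish. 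Step one is by comparison essentially a one-line singularity analysis.
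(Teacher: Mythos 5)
Your strategy is sound and lands on the correct constant, but it is genuinely different from --- and heavier than --- the paper's argument, and the part you yourself flag as the ``main technical obstacle'' is a real gap as written. The paper performs no asymptotic expansion of $\psi$ at all: writing $V'=\psi/(k\sqrt{1-r^2})$ as a $0/0$ quotient, it computes
\begin{equation*}
\frac{\psi'(r)}{\bigl(k\sqrt{1-r^2}\,\bigr)'}=-\frac{1}{k^2r}\left(k^2(1-r^2)V'(r)^2+1\right)\left(k(n-1)(r-R)V'(r)+1\right)
\longrightarrow-\frac{1}{k^2}\left(k(n-1)(1+R)V'(-1)-1\right)
\end{equation*}
as $r\downarrow-1$ and applies l'H\^opital's rule, which turns the limit into the self-consistent linear equation $k^2V'(-1)=1-k(n-1)(1+R)V'(-1)$ and gives the answer in one line, with no need to identify the rate at which $\psi$ vanishes. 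Your integrating-factor analysis near $s=1+r=0$ buys more (the explicit asymptotics $\psi\sim\sqrt{2s}\,/\bigl(k+(n-1)(1+R)\bigr)$, and in principle the existence of the limit of $V'$ rather than its presupposition), but at the cost of justifying the linearization: with only $\psi=o(1)$ from your step one, terms such as $\psi^3/s$ that you sweep into the remainder are not controlled, so the discarding of the homogeneous mode $Cs^{-A}$ and the survival of the leading constant are not yet proved.

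Both that gap and the slight weakness of your step one (your contradiction argument presumes $\lim_{r\downarrow-1}\psi(r)$ exists) are closed at once by invoking Lemmas \ref{thm:psi-shape3} and \ref{thm:psi-shape5}: a solution defined down to $r=-1$ must satisfy $0\le\psi(r)\le\eta(r)$ on $(-1,R)$, and since $\eta(r)=\sqrt{1-r^2}\,/\bigl((n-1)(R-r)\bigr)=O(\sqrt{1+r}\,)$, this yields both $\psi(-1)=0$ by squeezing and the a priori bound $\psi=O(\sqrt{s})$ that makes your remainder genuinely $O(\sqrt{s})$ and your Gronwall step routine. With that supplement your proof is complete; without it, the paper's l'H\^opital computation remains the shorter and safer route.
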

\begin{proof}
It is clear that $\psi(-1)=0$.
By $V'(r)=\frac{1}{k\sqrt{(1-r^2)}}\psi(r)$, we find
\begin{align*}
\frac{\psi'(r)}{(k\sqrt{1-r^2}~)'}&=-\frac{1}{k^2r}\left(k^2(1-r^2)V'(r)^2+1\right)\left(k(n-1)(r-R)V'(r)+1\right)\\
&\rightarrow-\frac{1}{k^2}\left(k(n-1)(1+R)V'(-1)-1\right)\quad(~r~\downarrow~-1~)
\end{align*}
By the l'H${\rm\hat{o}}$pital's rule, we find $V'(-1)=\frac{1}{k(k+(n-1)(1+R))}$.
\end{proof}

\begin{figure}[H]
\centering
\scalebox{1.0}{{\unitlength 0.1in%
}}
\caption{The graph of $\psi$ (Type VII$''$)}
\label{psiex4-2}
\end{minipage}
\end{figure}

For the graph of $\phi$ in Proposition \ref{thm:graph-psi}, we have not yet show whether $\psi$ in the case of {\rm Figures \ref{psiex1}} and {\rm \ref{psiex4-2}} exists or not.
By the following lemma, we obtain the existence.

\begin{lemma}
The solutions $\psi$ of the equation {\rm (\ref{eq:isopara-soliton-sphere3})} in {\rm Figures \ref{psiex4}} and {\rm \ref{psiex4-2}} exist.
\end{lemma}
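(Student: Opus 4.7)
The task is to exhibit solutions of (\ref{eq:isopara-soliton-sphere3}) corresponding to Figures \ref{psiex4} (Type VI$''$) and \ref{psiex4-2} (Type VII$''$), which are distinguished by satisfying the boundary condition $\psi=0$ at one of the endpoints $r=\pm 1$ where the equation is singular. Lemma \ref{thm:psi-shape4} (resp.\ Lemma \ref{thm:psi-shape8}) has already identified the only admissible slope $V'(-1)=V_1:=\frac{1}{k(k+(n-1)(1+R))}$ (resp.\ $V'(1)=-\frac{1}{k(k+(n-1)(1-R))}$). The strategy is to construct a smooth $V$ in a one-sided neighborhood of the endpoint realizing this slope, and then use the qualitative lemmas already proved to pin down the global shape.

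\textbf{Step 1 (Regularisation at $r=-1$).} Setting $s=r+1$ and writing $V'(r)=V_1+s\,w(s)$ in (\ref{eq:isopara-soliton-sphere2}), I note that the a priori singular $\tfrac{1}{s}$-term in the resulting equation for $w$ carries the coefficient
\begin{equation*}
-\bigl(k+(n-1)(1+R)\bigr)V_1+\frac{1}{k},
\end{equation*}
which vanishes identically by the definition of $V_1$. After this cancellation the equation for $w$ has coefficients smooth at $s=0$, so Picard--Lindel\"of yields a smooth $w$ on a right neighborhood of $s=0$. This produces a smooth $V$ on $[-1,-1+\delta)$ with $V'(-1)=V_1>0$ and $V(-1)$ a free parameter; the corresponding $\psi(r)=k\sqrt{1-r^2}V'(r)$ solves (\ref{eq:isopara-soliton-sphere3}) and satisfies $\lim_{r\downarrow -1}\psi(r)=0$ with $\psi(r)>0$ for $r$ slightly greater than $-1$.

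\textbf{Step 2 (Global behaviour, Type VI$''$).} Since (\ref{eq:isopara-soliton-sphere3}) is locally Lipschitz on compact subsets of $(-1,1)$, the solution $\psi$ extends maximally to the right. On $(-1,R)$ we must have $0<\psi<\eta$, since if $\psi(r_0)\ge\eta(r_0)$ at some $r_0$, then Lemma \ref{thm:psi-shape3} would force $\psi\to+\infty$ on the left, contradicting $\psi(-1)=0$. Lemma \ref{thm:psi-shape2} then gives a finite positive limit at $r=R$, so $\psi$ extends past $R$. On $(R,1)$ we have $\psi>0$, so Lemma \ref{thm:psi-shape1} produces some $r_1\in(R,1)$ with $\lim_{r\uparrow r_1}\psi(r)=+\infty$. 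This is precisely the Type VI$''$ shape.

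\textbf{Step 3 (Type VII$''$) and main obstacle.} The analogous construction at $r=1$---substitute $s=1-r$, take $V'(1)=-\frac{1}{k(k+(n-1)(1-R))}$, and verify the symmetric cancellation of the $\tfrac{1}{s}$-term---produces a solution with $\psi(1)=0$ and $\psi<0$ just below $1$. Extending leftward and using Lemmas \ref{thm:sign}, \ref{thm:psi-shape6}, and \ref{thm:psi-shape5} yields blow-up $\psi\to-\infty$ at some $r_1\in(-1,R)$, matching Figure \ref{psiex4-2}. The only genuinely nontrivial point in the whole argument is the cancellation verified in Step 1; it is not automatic but follows from having selected exactly the boundary slope dictated by Lemma \ref{thm:psi-shape4} (resp.\ \ref{thm:psi-shape8}). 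Once this local smooth continuation at the singular endpoint is secured, everything else is a mechanical application of the previously proved shape lemmas.
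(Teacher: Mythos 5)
Your route is genuinely different from the paper's. The paper proves this lemma by a soft shooting/connectedness argument: it partitions the solution set into those that vanish somewhere in $(-1,1)$, those that meet $\eta$ somewhere, and those with $\psi(\pm1)=0$; the graphs of the first two families sweep out open subsets of the connected strip $(-1,1)\times\mathbb{R}$, so the third family cannot be empty. You instead try to construct the Type VI$''$/VII$''$ solutions directly by a local analysis at the singular endpoint, which, if completed, would give more (uniqueness of these solutions and their endpoint asymptotics). Your identification of the cancellation condition is correct: requiring the $\tfrac{1}{1+r}$--singular part of (\ref{eq:isopara-soliton-sphere2}) to be finite at $r=-1$ forces exactly $V'(-1)=\frac{1}{k(k+(n-1)(1+R))}$, consistent with Lemma \ref{thm:psi-shape4}.

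However, Step 1 has a genuine gap at its crux. After the substitution $s=r+1$, $V'(r)=V_1+s\,w(s)$, the left-hand side $V''$ becomes $w+s\,w'$, not $w'$. The cancellation does make the right-hand side a smooth function $G(s,w)$, but the resulting equation is $s\,w'=G(s,w)-w$, which is still a \emph{singular} ODE: dividing by $s$ gives a right-hand side $\frac{G(s,w)-w}{s}$ that blows up as $s\downarrow 0$ for every $w$ except the single value $w_0$ solving $G(0,w_0)=w_0$. Picard--Lindel\"of therefore does not apply, and local existence of a solution with $w(0)=w_0$ is precisely the nontrivial point. It is true --- this is a regular singular point of Briot--Bouquet type, $s\,w'=f(s,w)$ with $\lambda:=\partial_wf(0,w_0)=\frac{-(n+k-1)-(n-1)R}{2k}-1<0$, and since $\lambda$ is negative the bounded solution exists and is unique (e.g.\ by a contraction argument on the integral equation $q(s)=\int_0^s e^{\int_\sigma^s\lambda(\tau)\tau^{-1}d\tau}(\cdots)\,d\sigma$, convergent because $\lambda<0$) --- but none of this is supplied by citing Picard--Lindel\"of, and without it the construction does not get off the ground. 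The same issue occurs in Step 3 at $r=1$. Steps 2 and 3 otherwise use the shape lemmas correctly, modulo the minor point that Lemma \ref{thm:psi-shape3} as stated requires the strict inequality $\psi(r_0)>\eta(r_0)$, so the borderline case $\psi(r_0)=\eta(r_0)$ needs a separate word.
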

\begin{proof}
For the set $S$ of all solutions of the equation {\rm (\ref{eq:isopara-soliton-sphere3})}, we define sets $S_1$, $S_2$, $S_3\subset S$  by
\begin{align*}
&S_1:=\{\psi\in S\vert\exists r_0\in(-1,1):\psi(r_0)=0\}\\
&S_2:=\{\psi\in S\vert\exists r_0\in(-1,1):\psi(r_0)=\eta(r_0)\}\\
&S_3:=\{\psi\in S\vert\psi(1)=0~or~\psi(-1)=0\}.
\end{align*}
Then, we have
\begin{equation*}
(-1,1)\times\mathbb{R}=\cup_{\psi\in S_1\cup S_2\cup S_3}{\rm Im}(\psi).
\end{equation*}
Since $\cup_{\psi\in S_1}{\rm Im}(\psi)$ and  $\cup_{\psi\in S_2}{\rm Im}(\psi)$ are open sets and $(-1,1)\times\mathbb{R}$ is connected, we find $S_3$ is not empty set.
Therefore, we obtain the statement of this lemma.
\end{proof}

Define $\zeta(r)=-\frac{1}{k(n-1)(r-R)}$.
By $V'(r)=\frac{1}{k\sqrt{1-r^2}}\psi(r)$ and Proposition \ref{thm:graph-psi}, we have the following proposition for the behavior of the graph of $V'$.
Besides, by Proposition \ref{thm:graph-derivative}, we obtain Theorem \ref{thm:shape-graph}.
\begin{proposition}\label{thm:graph-derivative}
For the solution $V$ of the equation {\rm (\ref{eq:isopara-soliton-sphere2})}, the behavior of the graph of $V'$ is like one of {\rm Figures \ref{V'ex1}}-{\rm \ref{V'ex4-2}}.
Here, the dotted curve in {\rm Figures \ref{V'ex1}}-{\rm \ref{V'ex4-2}} is the graph of $\zeta$.
\end{proposition}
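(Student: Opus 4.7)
The plan is to derive the seven qualitative pictures of $V'$ from the seven qualitative pictures of $\psi$ in Proposition \ref{thm:graph-psi} by means of the change of variables
\[
V'(r)=\frac{\psi(r)}{k\sqrt{1-r^2}},
\]
which is the definition used to reduce \eqref{eq:isopara-soliton-sphere2} to \eqref{eq:isopara-soliton-sphere3}. First I would note that the factor $k\sqrt{1-r^2}$ is strictly positive on $(-1,1)$, so $V'$ and $\psi$ have the same sign and the same zeros, and the monotonicity information provided by Lemma \ref{thm:sign} for $\psi$ transfers (after a bounded, smooth, strictly positive rescaling) to the corresponding monotonicity type for $V'$ on each of the subintervals $(-1,R)$ and $(R,1)$.

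Next I would pin down the behavior of $V'$ at the endpoints of its domain, which is the only place where the transformation $\psi\mapsto \psi/(k\sqrt{1-r^2})$ is not benign. For a blow-up $\lim_{r\to r_1}\psi(r)=\pm\infty$ at an interior point $r_1\in(-1,1)$ (Lemmas \ref{thm:psi-shape1}, \ref{thm:psi-shape3}, \ref{thm:psi-shape5}, \ref{thm:psi-shape7}) the factor $k\sqrt{1-r_1^2}$ is a positive constant, so $V'$ blows up to the same signed infinity, producing the vertical asymptote at an interior point depicted in Types II$''$--V$''$. For the interior limits $\lim_{r\to R}\psi(r)=C\in\mathbb{R}$ (Lemmas \ref{thm:psi-shape2}, \ref{thm:psi-shape6}) the value of $V'$ at $R$ is just $C/(k\sqrt{1-R^2})$, which is finite and of the same sign as $C$. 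Finally, at the true boundary $r=\pm 1$, the cases where $\psi$ extends continuously to $\pm 1$ (Lemmas \ref{thm:psi-shape4} and \ref{thm:psi-shape8}) yield the explicit finite boundary values
\[
V'(-1)=\frac{1}{k\bigl(k+(n-1)(1+R)\bigr)},\qquad V'(1)=-\frac{1}{k\bigl(k+(n-1)(1-R)\bigr)},
\]
which supply the finite endpoint ordinates that appear in Types I$''$ and VI$''$--VII$''$.

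To explain the dotted curve in the figures, I would observe that the dividing curve $\eta(r)=-\sqrt{1-r^2}/((n-1)(r-R))$ used in Lemma \ref{thm:sign} transforms under the same rescaling to
\[
\frac{\eta(r)}{k\sqrt{1-r^2}}=-\frac{1}{k(n-1)(r-R)}=\zeta(r),
\]
so the role played by $\eta$ in the $\psi$-picture (separating regions where $\psi'$ is positive from regions where it is negative) is played by $\zeta$ in the $V'$-picture, which justifies drawing $\zeta$ as the dotted reference curve in Figures \ref{V'ex1}--\ref{V'ex4-2}.

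The final step is purely bookkeeping: for each of the seven types I$''$--VII$''$ of Proposition \ref{thm:graph-psi} I would write down the corresponding behavior of $V'$ obtained by applying the three observations above (sign and monotonicity, interior blow-ups or limits at $R$, and boundary values at $\pm 1$) and check that the result matches Figures \ref{V'ex1}--\ref{V'ex4-2} one by one. The only genuine subtlety, and the step I would expect to require the most care, is confirming the relative position of the graph of $V'$ with respect to $\zeta$ near $r=\pm 1$ and near $r=R$ in the types where $\psi$ tends to a finite constant $C$; this is handled by writing $V'-\zeta=(\psi-\eta)/(k\sqrt{1-r^2})$ and using the sign information for $\psi-\eta$ already recorded in Lemma \ref{thm:sign}, so that no further differential inequality is needed.
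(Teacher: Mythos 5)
Your proposal is correct and follows exactly the route the paper takes: the paper derives Proposition \ref{thm:graph-derivative} from Proposition \ref{thm:graph-psi} solely via the identity $V'(r)=\psi(r)/(k\sqrt{1-r^2})$, and your observations about signs and zeros, interior blow-ups, the finite endpoint values supplied by Lemmas \ref{thm:psi-shape4} and \ref{thm:psi-shape8}, and the correspondence $\eta(r)/(k\sqrt{1-r^2})=\zeta(r)$ are precisely the (unstated) content of that one-line deduction. The only clause to treat with care is the parenthetical claim that monotonicity ``transfers'' under the rescaling --- multiplication by the non-constant positive factor $1/(k\sqrt{1-r^2})$ preserves sign and zeros but not monotonicity in general --- though the paper leaves this point equally implicit, and the qualitative features actually used afterwards (sign, asymptotes, limits at $R$ and $\pm1$, position relative to $\zeta$) are all justified by your argument.
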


\begin{figure}[H]
\centering
\scalebox{1.0}{{\unitlength 0.1in%
}}
\caption{The graph of $V'$ (Type VII$'$)}
\label{V'ex4-2}
\end{minipage}
\end{figure}

\section{The domain of the function $u$ in Theorem \ref{thm:shape-graph}}

In this section, we investigate the domain of the function $u=V\circ r$ over $M\subset\mathbb{S}^n$ in Theorem \ref{thm:shape-graph} in the case where the number $k$ of distinct principal curvatures of the isoparametric hypersurface for $r$ is $1$, $2$ or $3$.
From the result of Theorem \ref{thm:shape-graph}, we find that $M$ does not contain some tubular neighborhoods of the focal submanifolds $r^{-1}(1)$ and $r^{-1}(-1)$ in case that the type of $V$ in Theorem \ref{thm:shape-graph} is I-V.
Also, we find that $M$ contains $r^{-1}(-1)$ and does not contain a tubular neighborhood of the focal submanifold $r^{-1}(1)$ in case that the type of $V$ is VI and $M$ contains $r^{-1}(1)$ and does not contain a tubular neighborhood of the focal submanifold $r^{-1}(-1)$ in case that the type of $V$ is VII.

When $k=1$, the isoparametric function $r$ is defined by
$$
r(x_1,\cdots,x_{n+1})=x_{n+1}~~\quad(x_1,\cdots,x_{n+1})\in\mathbb{S}^n.
$$
Therefore, from the result of Theorem \ref{thm:shape-graph}, the domain $M$ of $u$ is an open set of $\mathbb{S}^n$ including the set $\{(x_1,\cdots,x_n,0)\in\mathbb{R}^{n+1}\vert~x_1^2+\cdots+x_n^2=1\}\subset\mathbb{S}^n$.
Also, as $p=(0,\cdots,0,1), q=(0,\cdots,0,-1)$, we find that $p,q\notin M$ in case that the type of $V$ in Theorem \ref{thm:shape-graph} is I-V, $p\notin M$, $q\in M$ in case that the type of $V$ is VI and $p\in M$, $q\notin M$ in case that the type of $V$ is VII.

When $k=2$, the isoparametric function $r$ is defined by
$$
r(x_1,\cdots,x_{n+1})=\sum_{i=1}^lx_i^2-\sum_{i=l+1}^{n+1}x_i^2~~\quad(x_1,\cdots,x_{n+1})\in\mathbb{S}^n.
$$
Here, $l\in\{1,\cdots,n\}$.
Since $r^{-1}(t)=\{(x,y)\in\mathbb{R}^l\times\mathbb{R}^{n-l+1}\vert~\|x\|^2=\frac{1+t}{2},\|y\|^2=\frac{1-t}{2}\}$ for $t\in(-1,1)$, as $S_\theta:=\{((\cos{\theta},0,\cdots,0)\mathbf{A},(\sin{\theta},0,\cdots,0)\mathbf{B})\in\mathbb{R}^l\times\mathbb{R}^{n-l+1}~\vert~\mathbf{A}\in SO(l-1),\mathbf{B}\in SO(n-l)\}$, we obtain that $r^{-1}(t)=S_{\theta_t}$ for $\theta_t\in(0,\frac{\pi}{2})$ with $\cos{\theta_t}=\sqrt{\frac{1+t}{2}}$ and $\sin{\theta_t}=\sqrt{\frac{1-t}{2}}$.
Therefore, from the result of Theorem \ref{thm:shape-graph}, we find that the domain $M$ is the open set of $\mathbb{S}^n$ including $S_{\theta_R}$.
Also, we find that $M=\cup_{\theta\in I}S_\theta$ for an interval $I\subset(0,\frac{\pi}{2})$ in case that the type of $V$ in Theorem \ref{thm:shape-graph} is I-V, $M=\cup_{\theta\in(a,\frac{\pi}{2}]}S_\theta$ for some $a\in(0,\frac{\pi}{2})$ in case that the type of $V$ is VI and $M=\cup_{\theta\in[0,a)}S_\theta$ for $a\in(0,\frac{\pi}{2})$ in case that the type of $V$ is VII.

When $k=3$, an isoparametric hypersurface is a principal orbit of the isotropy representation of the rank two symmetric space $G/K=SU(3)/SO(3)$, $(SU(3)\times SU(3))/SU(3)$, $SU(6)/Sp(3)$ or $E_6/F_4$.
Since the principal orbit of the isotropy representation intersects with the Weyl domain $C$ at only one point, we find that there exists an open subset $U\subset\overline{C}\cap\mathbb{S}^n$ such that $K\cdot U$ is equal to $M$.
Here, $T_e(G/K)$ for $e\in G/K$ is identified with $\mathbb{R}^{n+1}$.
Also, we find that $M\subset K\cdot C$ in case that the type of $V$ in Theorem \ref{thm:shape-graph} is I-V and $M\cap(\overline{C}\setminus C)\neq\emptyset$  in case that the type of $V$ is $VI$ or $VII$.

In the rest of this paper, we shall give explicit descriptions of Weyl domains for the symmetric space $G/K=SU(3)/SO(3)$, $(SU(3)\times SU(3))/SU(3)$ or $SU(6)/Sp(3)$.
Define $\mathfrak{g}$, $\mathfrak{k}$ and $\mathfrak{p}$ by $\mathfrak{g}:=Lie G$, $\mathfrak{k}:=Lie K$ and $\mathfrak{g}=\mathfrak{k}\oplus\mathfrak{p}$.
Denote by $\mathfrak{a}$ the maximal abelian subspace of $\mathfrak{p}$.
When $G/K=SU(3)/SO(3)$, we have that $\mathfrak{p}=\{\mathbf{A}~:~3\times3$ symmetric space purely imaginary matrix such that the trace of $\mathbf{A} =0\}$ and the diagonal matrices in $\mathfrak{p}$ form $\mathfrak{a}$.
So, we obtain
\begin{equation*}
\mathfrak{a}=\left\{\left.
\begin{pmatrix}
\sqrt{-1}a & 0 & 0\\
0 &\sqrt{-1}b & 0\\
0 & 0 & -\sqrt{-1}(a+b)
\end{pmatrix}
\right\vert a,b\in\mathbb{R}
\right\}.
\end{equation*}
Define $e_i~(1\le i\le3)$ as $e_i(\mathbf{A})$ is the diagonal element of $\mathbf{A}$.
Then, we find that for the basis $\{e_1,e_2\}$ the positive restricted root system $\triangle_+=\{\sqrt{-1}(e_1-e_2), \sqrt{-1}(e_1-e_3),\sqrt{-1}(e_2-e_3)\}$.
Since the Killing form $B$ is defined by $B(\mathbf{X},\mathbf{Y})=6Tr(\mathbf{X}\mathbf{Y})$, as
\begin{align*}
\mathbf{A_1}=
\begin{pmatrix}
\sqrt{-1} & 0 & 0\\
0 &-\sqrt{-1} & 0\\
0 & 0 & 0
\end{pmatrix},~
\mathbf{A_2}=
\begin{pmatrix}
\sqrt{-1} & 0 & 0\\
0 & 0 & 0\\
0 & 0 & -\sqrt{-1}
\end{pmatrix},~
\mathbf{A_3}=
\begin{pmatrix}
0 & 0 & 0\\
0 & \sqrt{-1} & 0\\
0 & 0 & -\sqrt{-1}
\end{pmatrix},
\end{align*}
we obtain the Weyl domain $C$ in Figure \ref{Weyl}.
Here, for the angle $\theta_{ij}$ with respect to $\mathbf{A_i}$ and $\mathbf{A_j}$, we find $\theta_{12}=\theta_{23}=\frac{\pi}{3}$ and $\theta_{13}=\frac{2\pi}{3}$.

\begin{figure}[H]
\centering
\scalebox{1.0}{{\unitlength 0.1in%
\begin{picture}(23.3000,20.0000)(2.0000,-22.0000)%
%
\special{pn 8}%
\special{pa 200 200}%
\special{pa 2200 200}%
\special{pa 2200 2200}%
\special{pa 200 2200}%
\special{pa 200 200}%
\special{pa 2200 200}%
\special{fp}%
%
\special{pn 13}%
\special{pa 1200 1200}%
\special{pa 1800 1200}%
\special{fp}%
\special{sh 1}%
\special{pa 1800 1200}%
\special{pa 1733 1180}%
\special{pa 1747 1200}%
\special{pa 1733 1220}%
\special{pa 1800 1200}%
\special{fp}%
%
\special{pn 13}%
\special{pa 1200 1200}%
\special{pa 1600 700}%
\special{fp}%
\special{sh 1}%
\special{pa 1600 700}%
\special{pa 1543 740}%
\special{pa 1567 742}%
\special{pa 1574 765}%
\special{pa 1600 700}%
\special{fp}%
%
\special{pn 13}%
\special{pa 1200 1200}%
\special{pa 800 700}%
\special{fp}%
\special{sh 1}%
\special{pa 800 700}%
\special{pa 826 765}%
\special{pa 833 742}%
\special{pa 857 740}%
\special{pa 800 700}%
\special{fp}%
%
\special{pn 8}%
\special{pa 1200 1200}%
\special{pa 2200 720}%
\special{dt 0.045}%
%
\special{pn 8}%
\special{pa 1200 1200}%
\special{pa 1200 200}%
\special{dt 0.045}%
%
\special{pn 4}%
\special{pa 1660 200}%
\special{pa 1200 660}%
\special{fp}%
\special{pa 1720 200}%
\special{pa 1200 720}%
\special{fp}%
\special{pa 1780 200}%
\special{pa 1200 780}%
\special{fp}%
\special{pa 1840 200}%
\special{pa 1200 840}%
\special{fp}%
\special{pa 1900 200}%
\special{pa 1200 900}%
\special{fp}%
\special{pa 1960 200}%
\special{pa 1200 960}%
\special{fp}%
\special{pa 2020 200}%
\special{pa 1200 1020}%
\special{fp}%
\special{pa 1540 740}%
\special{pa 1200 1080}%
\special{fp}%
\special{pa 1380 960}%
\special{pa 1200 1140}%
\special{fp}%
\special{pa 2080 200}%
\special{pa 1580 700}%
\special{fp}%
\special{pa 2140 200}%
\special{pa 1590 750}%
\special{fp}%
\special{pa 2190 210}%
\special{pa 1260 1140}%
\special{fp}%
\special{pa 2200 260}%
\special{pa 1320 1140}%
\special{fp}%
\special{pa 2200 320}%
\special{pa 1440 1080}%
\special{fp}%
\special{pa 2200 380}%
\special{pa 1550 1030}%
\special{fp}%
\special{pa 2200 440}%
\special{pa 1670 970}%
\special{fp}%
\special{pa 2200 500}%
\special{pa 1780 920}%
\special{fp}%
\special{pa 2200 560}%
\special{pa 1900 860}%
\special{fp}%
\special{pa 2200 620}%
\special{pa 2010 810}%
\special{fp}%
\special{pa 2200 680}%
\special{pa 2130 750}%
\special{fp}%
\special{pa 1570 770}%
\special{pa 1500 840}%
\special{fp}%
\special{pa 1600 200}%
\special{pa 1200 600}%
\special{fp}%
\special{pa 1540 200}%
\special{pa 1200 540}%
\special{fp}%
\special{pa 1480 200}%
\special{pa 1200 480}%
\special{fp}%
\special{pa 1420 200}%
\special{pa 1200 420}%
\special{fp}%
\special{pa 1360 200}%
\special{pa 1200 360}%
\special{fp}%
\special{pa 1300 200}%
\special{pa 1200 300}%
\special{fp}%
\special{pa 1240 200}%
\special{pa 1200 240}%
\special{fp}%
\put(25.3000,-3.9000){\makebox(0,0)[lb]{{\Large $C$}}}%
\put(18.4000,-12.5000){\makebox(0,0)[lb]{$\mathbf{A_1}$}}%
\put(16.3000,-6.7000){\makebox(0,0)[lb]{$\mathbf{A_2}$}}%
\put(6.6000,-6.8000){\makebox(0,0)[lb]{$\mathbf{A_3}$}}%
%
\special{pn 8}%
\special{pa 2500 340}%
\special{pa 2070 340}%
\special{fp}%
\special{sh 1}%
\special{pa 2070 340}%
\special{pa 2137 360}%
\special{pa 2123 340}%
\special{pa 2137 320}%
\special{pa 2070 340}%
\special{fp}%
\put(3.1000,-21.0000){\makebox(0,0)[lb]{{\LARGE $\mathfrak{a}$}}}%
\end{picture}}}
\caption{The Weyl domain $C$}
\label{Weyl}
\end{figure}
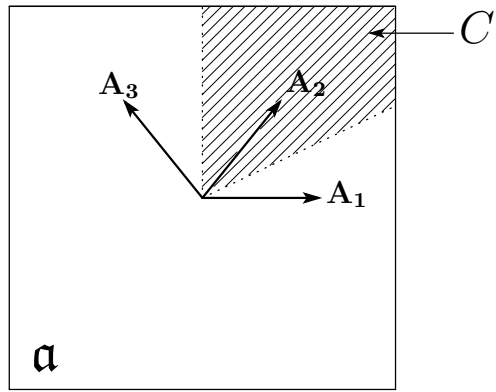

When $G/K=(SU(3)\times SU(3))/SU(3)$, we have that 
\begin{equation*}
\mathfrak{p}=\left\{\left.\begin{pmatrix}
\mathbf{A} & \mathbf{O} \\
\mathbf{O} & -\mathbf{A} \\
\end{pmatrix}
\right\vert
\mathbf{A}:3\times3~skew~Hermitian~matrix,~the~trace~of~\mathbf{A}=0\right\}
\end{equation*}
and the diagonal matrices in $\mathfrak{p}$ form $\mathfrak{a}$.

Also, When $G/K=SU(6)/Sp(3)$, we have that 
\begin{align*}
\mathfrak{p}=\left\{\left.\begin{pmatrix}
\mathbf{A} & \mathbf{B} \\
\mathbf{\overline{B}} & -\mathbf{\overline{A}} \\
\end{pmatrix}
\right\vert
\begin{matrix}
\mathbf{A}:3\times3~skew~Hermitian~matrix,~the~trace~of~\mathbf{A}=0,\\
~\mathbf{B}~:~3\times3~skew~symmetric~matrix
\end{matrix}\right\}
\end{align*}
and the diagonal matrices in $\mathfrak{p}$ form $\mathfrak{a}$.

In a similar way we obtain in the case that $G/K=SU(3)/SO(3)$, that the Weyl domains $C$ for $(SU(3)\times SU(3))/SU(3)$ and $SU(6)/Sp(3)$ are as in Figure \ref{Weyl}.

\section*{Acknowledgement}

I would like to thank my supervisor Naoyuki Koike for helpful support and valuable comment.

\end{document}